\newtheorem{thm}{Theorem}[section]
\newtheorem{lem}[thm]{Lemma}
\newtheorem{prop}[thm]{Proposition}
\theoremstyle{definition}
\numberwithin{equation}{section}
\newcommand{\mr}{\ensuremath{\mathbb R}}
\newcommand{\mh}{\ensuremath{\mathbb H}}
\newcommand{\fd}{\ensuremath{\mathfrak d}}
\newcommand{\fp}{\ensuremath{\mathfrak p}}
\newcommand{\fm}{\ensuremath{\mathfrak m}}
\newcommand{\fn}{\ensuremath{\mathfrak n}}
\newcommand{\ri}{\ensuremath{\mathcal{O}}} 
\newcommand{\id}{\ensuremath{\mathcal{I}}} 
\newcommand{\shortmod}{\ensuremath{\negthickspace \negthickspace \negthickspace \pmod}}
\newcommand{\sumstar}{\sideset{}{^*}\sum}
\begin{document}


\baselineskip=17pt


\title{Low-lying zeros for $L$-functions associated to Hilbert modular forms of large level}

\author{Sheng-Chi Liu\\
Department of Mathematics and Statistics\\
Washington State University\\
Pullman, WA 99164, USA\\
E-mail: scliu.wsu@gmail.com
\and
Steven J. Miller\\
Department of Mathematics and Statistics\\
Williams College\\
Williamstown, MA 01267, USA\\
E-mail: sjm1@williams.edu, Steven.Miller.MC.96@aya.yale.edu}

\date{}

\maketitle


\renewcommand{\thefootnote}{}

\footnote{2010 \emph{Mathematics Subject Classification}: Primary 11F41, Secondary 11M99.}

\footnote{\emph{Key words and phrases}: Hilbert modular forms, 1-level density, low-lying zeros.}

\footnote{The first author was supported by a grant from the Simons Foundation (\#344139) and the second by a grant from the NSF (DMS1561945) and Washington State University.}

\renewcommand{\thefootnote}{\arabic{footnote}}
\setcounter{footnote}{0}


\begin{abstract}
We determine the 1-level density of families  of Hilbert modular forms, and show the answer agrees only with orthogonal random matrix ensembles.
\end{abstract}

\maketitle


\section{Introduction}

\subsection{History}

Since the work of Montgomery and Odlyzko, there has been a large body of literature on the similarities in behavior of zeros of $L$-functions and the eigenvalues of random matrices. Assuming the Generalized Riemann Hypothesis (GRH), the non-trivial zeros of automorphic $L$-functions have real part 1/2, and thus these zeros can be ordered (and many have hoped to find a spectral interpretation). The earliest  statistics studied were the $n$-level correlations and spacings between adjacent zeros (see \cite{Hej, Mo, Od1, Od2, RS}). While the number theory agreed with the behavior of eigenvalues from the Gaussian Unitary Ensemble, these statistics are insensitive to the removal of finitely many zeros. In particular, these statistics cannot detect any information about the behavior of zeros at or near the central point. This is unfortunate, as this is where the most important arithmetic lives (for example, the Birch and Swinnerton-Dyer conjecture for elliptic curves).

A major breakthrough came with the work of Katz and Sarnak \cite{KS1,KS2}. They proved that while many random matrix ensembles have the same $n$-level correlations, they showed there is another statistic, the $n$-level density, where each ensemble has a different answer. The major difference between these two statistics is that in the $n$-level correlation all the zeros contribute equally, while in the $n$-level density most of the contribution comes from the zeros near or at the central point, making it an ideal quantity to investigate the arithmetic of families. It is important to note that this statistic, which we define below, is for a family of $L$-function. While an individual $L$-function has infinitely many zeros on the critical line (and thus we have the ability to perform averages over them), it is expected that there are only a bounded number within a few multiples of the analytic conductor of the central point. Thus, in order to be able  to average, we look at a \emph{family} of $L$-functions with similar properties. Examples include $L$-functions attached to Dirichlet characters, elliptic curves, modular forms, and Maass forms, as well as symmetric lifts and Rankin-Selberg convolutions of the above, to name just a few; in this paper we study Hilbert modular forms. There is now a vast literature on these and related  families showing agreement between number theory and random matrix theory; see the seminal paper \cite{ILS} for the first work on the subject and \cite{IK} for  a review of needed properties of $L$-functions, the survey articles \cite{BFMT-B, MMRT-BW} and the references therein for a collection of some results and methods, and \cite{DM,SaShTe} for a discussion on how to determine the symmetry group associated to a family of  $L$-functions.

We define the 1-level density of an $L$-function $L(s,f)$ with zeros $1/2 + i \gamma_f$ and analytic conductor $c_f$ by
\begin{equation}
 D(f; \phi) \ := \  \sum_{\gamma_f} \phi \left( \frac{\gamma_f}{2\pi} \log c_f \right),
\end{equation}
where $\phi(x)$ is an even Schwartz function such that its Fourier transform
\begin{equation*}
\widehat{\phi} (y)\ := \  \int_{-\infty}^{\infty} \phi(x) e^{-2\pi i xy}dx
\end{equation*}
has compact support (and thus $\widehat{\phi}(y)$ extends to an entire function). Note  that if GRH is true then each $\gamma_f \in \mathbb{R}$ and the non-trivial zeros are all  on the critical line ${\rm Re}(s) = 1/2$. As $\phi$ is Schwartz, it is of rapid decay and most of the contribution is from the zero near the central point, as desired. The 1-level density of a family $\mathcal{F}$ is a the average\footnote{Frequently one considers a weighted  average, where the weights facilitate applying summation formulas. For example, the harmonic or Petersson weights are frequently used for families of cuspidal newforms, which assist in applying the Petersson formula to evaluate.} of $D(f,\phi)$ over  $f\in\mathcal{F}$; one typically breaks the family $\mathcal{F}$ into sub-families $\mathcal{F}_N$ of the same conductor $N$ (or conductor in some interval, say $[N, 2N)$, and then let $N\to\infty$. One can similarly define the $n$-level density (see \cite{ILS}, where we have a test function of $n$-variables and sum over zeros $\gamma_{f;i_1}, \dots, \gamma_{f;i_n}$ such that $i_j \neq \pm i_\ell$ if $j \neq \ell$) or the $n$\textsuperscript{th} centered moment (see \cite{HM}).\footnote{This condition in the $n$-level density simplifies the resulting expression and leads to the clean determinant expansions; via inclusion-exclusion it is equivalent to the $n$\textsuperscript{th} centered moment if the test function $\phi$ satisfies $\phi(x_1,\dots,x_n) = \prod_{\ell=1}^n \phi(x_\ell)$.}

The Katz-Sarnak density conjecture states that as the conductors tend to infinity,  the $n$-level density of a  family of $L$-functions converges to the $n$-level density of the eigenvalues of a sub-ensemble of classical compact matrices with size  tending to infinity: \begin{equation}
\lim_{N\to\infty} D_n(\mathcal{F}_N,\phi) \ = \ \int_{-\infty}^\infty\cdots\int_{-\infty}^\infty \phi(x_1,\dots,x_n)W_{n,\mathcal{G}(\mathcal{F})}(x_1,\dots,x_n)dx_1\cdots dx_n,\end{equation} where $\mathcal{G}(\mathcal{F})$ is the symmetry group associated to the family of $L$-functions, and
\begin{center}
\begin{tabular}{cc} \\
$\underline{\ \ \ \ \ \ \ \ \ \ \ \ \mathcal{G} \ \ \ \ \ \ \ \ \
\ \ }$ & $\underline{\ \ \ \ \ \ \ \ \ \ \ \ W_{n,\mathcal{G}} \
\ \ \ \ \ \ \ \ \ \ \ }$ \\ \\
${\rm U(N)}$ & $\det\left(K_0(x_j,x_k)\right)_{1 \leq
j, k
\leq n}$ \\ \\
${\rm Sp(N)}$ & $\det\left(K_{-1}(x_j,x_k)\right)_{1 \leq j, k \leq
n}$ \\ \\
${\rm SO(even)}$ & $\det\left(K_{1}(x_j,x_k)\right)_{1
\leq j, k \leq n}$ \\ \\
${\rm SO(odd)}$ & $\det\left(K_{-1}(x_j,x_k)\right)_{1 \leq j, k \leq
n}$ \\ \\  
$ $ & $\ \ +\ \sum_{\nu=1}^n \delta(x_\nu) \det\left(K_{-1}(x_j,x_k)\right)_{1
\leq j, k \neq \nu \leq n}$
\end{tabular}
\end{center}
where
\begin{eqnarray}
K_\epsilon(x,y)\ =\ \frac{\sin\left(\pi(x-y) \right)}{\pi(x-y)} +
\epsilon \frac{\sin\left(\pi(x+y) \right)}{\pi(x+y)}
\end{eqnarray} and ${\rm O}$ is the average of ${\rm SO(even)}$ and ${\rm SO(odd)}$.

It is often convenient to look on the Fourier transform side; for the $1$-level densities we have
\begin{eqnarray}
\widehat{W_{1,{\rm SO(even)}} }(u) &\ =\ & \delta_0(u) + \frac12 \eta(u)
\nonumber\\ \widehat{W_{1,{\rm O}} }(u) & = & \delta_0(u) + \frac12
\nonumber\\ \widehat{W_{1,{\rm SO(odd)}} }(u) & = & \delta_0(u) - \frac12
\eta(u) + 1 \nonumber\\ \widehat{W_{1,{\rm Sp}} }(u) & = & \delta_0(u) -
\frac12 \eta(u) \nonumber\\ \widehat{W_{1,{\rm U}} }(u) & = & \delta_0(u)
\end{eqnarray}
where $\eta(u)$ is $1$, $1/2$, and $0$ for $|u|$ less than $1$,
$1$, and greater than $1$, and $\delta_0$ is the standard Dirac
Delta functional.


The five classical compact groups have distinguishable 1-level densities when the support of $\widehat{\phi}$ exceeds $[-1, 1]$; however, for support in $(-1,1)$ the three orthogonal flavors are mutually indistinguishable (though they are different than symplectic and unitary). This motivates many works which try to break the region $[-1,1]$, though this is frequently difficult due to arithmetic obstructions in the resulting sums. Fortunately it is not necessary to identify the underlying group symmetry; in his thesis Miller \cite{Mil1, Mil2} noted  that the  five groups have mutually distinguishable 2-level densities for arbitrarily small support.

\subsection{New Results}

Let $F$ be a totally real number field of degree $n$ over $\mathbb{Q}$ with narrow class number one.
Let $\mathcal{O}$ be the ring of integers, $U$ be the unit group, $D$ be the discriminant, $R$ be the regulator, and $W$ be the number of roots of unity. For $\nu \in F$, let $\nu^{(i)}:= \sigma_i(\nu)$ where
$\sigma_1, \sigma_2, \dots, \sigma_n$ are the real embeddings of $F$.
We denote by $\nu \gg 0$ for a totally positive $\nu \in F$.

Let $\Gamma = SL_2(\mathcal{O})$ be the Hilbert modular group, which acts on the $n$-fold product $\mh^n$ of the complex
upper half-plane $\mathbb{H}$.
For any ideal $\mathcal{I} \subset \mathcal{O}$, let
\begin{equation*}
\Gamma_0(\id) \ := \  \left\{ \begin{pmatrix}
a & b \\ c & d \end{pmatrix} \in \Gamma \ \Big| \ c \equiv 0 \pmod{\id} \right\}.
\end{equation*}
We assume $\id$ is square-free  in this paper and denote the norm of $\id$ by $N(\id)= [ \ri: \id ]$.

For an integer $k \geq 2$, let $H^{\star}_{2k}(\id)$ denote the set of \textit{primitive} Hilbert modular cusp forms of weight $(2k, \dots, 2k)$ and level $\id$. Assuming GRH for $L(s, f)$, we may write the nontrivial zeros of $L(s, f)$ by $\rho_f= \frac{1}{2} + i \gamma_f$ with $\gamma_f$ real.

Our main result is that  the one-level density agrees only with orthogonal symmetry for the family  $f \in H^{\star}_{2k}(\id)$. As our support exceeds $[-1, 1]$ we are able to uniquely identify the corresponding symmetry group, and there is no need to compute the 2-level density.

\begin{thm}
Fix any Schwartz function $\phi$ with  $\mathrm{supp}(\widehat{\phi}) \subset  (-\frac{3}{2}, \frac{3}{2})$.
Assume GRH for $L(s,f)$ and $L(s, \mathrm{sym}^2f)$ for all $f \in H^{\star}_{2k}(\id)$.
Then we have
\begin{equation}
\lim_{N(\id) \to \infty} \frac{1}{|H^{\star}_{2k}(\id)|} \sum_{f \in H^{\star}_{2k}(\id)} D(f; \phi) \ = \  \int_{-\infty}^{\infty} \phi (x) W(O) (x) dx
\end{equation}
where $\id$ runs over square-free ideals and $W(O)=1 + \frac{1}{2} \delta_0(x)$ and $c_f= k^{2n} N(I)$ is the analytic conductor.
\end{thm}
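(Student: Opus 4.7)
The plan is to combine the Riemann--von Mangoldt explicit formula for $L(s,f)$ with the Petersson trace formula for Hilbert modular forms on $\Gamma_0(\id)$. Writing $\lambda_f(\fp^m) = \alpha_{f,\fp}^m + \beta_{f,\fp}^m$ for the normalized Hecke eigenvalues at a prime ideal $\fp\subset\ri$, the explicit formula (under GRH for $L(s,f)$) gives
\[
D(f;\phi) \ = \ \widehat\phi(0) \ - \ S_1(f;\phi) \ - \ S_2(f;\phi) \ - \ S_{\geq 3}(f;\phi) \ + \ O\!\left(\tfrac{\log\log c_f}{\log c_f}\right),
\]
where $S_m(f;\phi) := \tfrac{2}{\log c_f}\sum_{\fp}\tfrac{\lambda_f(\fp^m)\log N\fp}{N\fp^{m/2}}\widehat\phi\!\left(\tfrac{m\log N\fp}{\log c_f}\right)$. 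The tail $S_{\ge 3}$ is absorbed into the error by Deligne's bound and the compact support of $\widehat\phi$, reducing the theorem to identifying the family averages of $S_1$ and $S_2$.

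The prime-square piece $S_2$ will furnish the orthogonal correction $+\tfrac12\phi(0)$. Using $\alpha_{f,\fp}^2+\beta_{f,\fp}^2 = \lambda_{\mathrm{sym}^2 f}(\fp) - 1$, I split $S_2$ into an $f$-independent piece and a symmetric-square piece. By the prime ideal theorem for $F$, the $f$-independent piece contributes $-\tfrac12\phi(0)+o(1)$; the symmetric-square piece is recast as a contour integral of $\tfrac{L'}{L}(s,\mathrm{sym}^2 f)$ and shifted past $\operatorname{Re}(s) = \tfrac12$ using GRH for $L(s,\mathrm{sym}^2 f)$, yielding $o(1)$ after averaging over $f$. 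Since $\int \phi\, W(O)\, dx = \widehat\phi(0) + \tfrac12\phi(0)$, the theorem reduces to showing that the averaged $S_1$ is $o(1)$ for all $\mathrm{supp}(\widehat\phi)\subset(-\tfrac32,\tfrac32)$.

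The main obstacle, dictating the support restriction, is $S_1$. Here I would invoke the Petersson trace formula for $\Gamma_0(\id)\subset \SL{2}{\ri}$, available in the totally real narrow class number one setting through the work of Luo, Trotabas, and Kowalski--Saha. The harmonic average of $\lambda_f(\fp)$ over $H^{\star}_{2k}(\id)$ decomposes into a diagonal Kronecker delta (which vanishes since $\fp\neq(1)$) plus an off-diagonal contribution
\[
\sum_{\fc \equiv 0 \shortmod{\id}}\frac{KS_F(\fp,1;\fc)}{N\fc}\prod_{i=1}^n J_{2k-1}\!\left(\frac{4\pi\sqrt{(N\fp)^{(i)}}}{\fc^{(i)}}\right),
\]
with $KS_F$ a Hilbert Kloosterman sum. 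Applying the Weil bound to $KS_F$, exploiting the decay of the $n$-fold product of $J$-Bessel functions, and averaging the modulus $\fc$ over ideals divisible by $\id$ produces a saving of a negative power of $N(\id)$ uniformly for $\mathrm{supp}(\widehat\phi)\subset(-\tfrac32,\tfrac32)$. This is the totally real analog of the Iwaniec--Luo--Sarnak argument for cuspidal newforms over $\mq$; narrow class number one lets us replace sums over cusps of $\Gamma_0(\id)$ by sums over ideals of $\ri$.

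Two clean-up steps remain. First, the Petersson formula naturally produces a harmonic-weighted average with weights $\asymp 1/L(1,\mathrm{sym}^2 f)$; these are removed via an approximate functional equation for $L(1,\mathrm{sym}^2 f)$, invoking GRH for $L(s,\mathrm{sym}^2 f)$ a second time in the manner of Iwaniec--Luo--Sarnak. Second, since $\id$ is square-free, the newform contribution is isolated by M\"obius inversion over divisors $\fm\mid\id$, using that oldforms from level $\fm$ lift in a controlled $\tau(\id/\fm)$-dimensional way; neither step degrades the support. The hardest technical point remains the Kloosterman analysis pushing the effective support from the natural $(-1,1)$ out to $(-\tfrac32,\tfrac32)$, which relies on the interaction between cancellation in $KS_F$ and the $n$-fold archimedean oscillation of the Bessel kernel uniformly in $\fc$ and in the level $\id$.
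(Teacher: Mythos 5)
Your proposal follows essentially the same route as the paper: the ILS-style explicit-formula decomposition in which the prime-square sum furnishes $\tfrac12\phi(0)$ (via the prime ideal theorem for $F$ and GRH for $L(s,\mathrm{sym}^2f)$), Luo's Petersson trace formula for $\Gamma_0(\id)$ with Weil-bounded Hilbert Kloosterman sums and archimedean Bessel decay controlling the prime sum, and M\"obius inversion over $\mathfrak{LM}=\id$ together with a GRH-truncated Dirichlet series for $Z(1,f)\sim L(1,\mathrm{sym}^2 f)$ to pass from the harmonic average over all of $S_{2k}(\id)$ to the natural average over newforms. The one point your sketch elides, and which the paper treats with some care, is that the Petersson formula over $F$ carries an extra sum over units $\epsilon\in U$ inside the Bessel factors; making the off-diagonal converge requires choosing the Bessel decay exponent $\eta$ separately at each embedding according to whether $|\epsilon^{(i)}|\le 1$ or $>1$ so that $\sum_{\epsilon\in U}\prod_{|\epsilon^{(i)}|>1}|\epsilon^{(i)}|^{-\eta}<\infty$, which is the genuinely new wrinkle relative to the $\mathbb{Q}$ case.
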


\section{Preliminaries}
For an integer $k \geq 2$, let $S_{2k}(\id)$ be the  space of cuspidal Hilbert modular forms
of weight $(2k, \dots, 2k)$ for $\Gamma_0(\id)$ (see \cite{Ga}). This is the space of holomorphic functions $f(z)$ on $\mh^n$ which vanish in the cusps of
$\Gamma_0(\id)$ and satisfy
\begin{align*}
f(\gamma z)\ = \  N(cz + d)^{2k}f(z) \quad \textrm{for} \quad \gamma \ = \
        \left( \begin{array} {ccc}
         a & b \\
         c & d
         \end{array} \right)
      \in \Gamma_0(\id),
\end{align*}
where for $z=(z_1, \dots, z_n) \in \mh^n$ we have $$ N(cz + d) \ =\ \prod_{i=1}^{n}(\sigma_i(c) z_i +  \sigma_i(d)).$$

It was shown by Shimizu \cite{Sh} that $$\mathrm{dim} (S_{2k}(\id))\ \sim\ \mathrm{vol}(\Gamma_0(\id) \backslash \mh^n) \frac{(2k-1)^n}{(4\pi)^n} $$ as $kN(\id) \to \infty$.

For each $f \in S_{2k}(\Gamma_0(\id))$ one has the Fourier expansion
$$ f(z)\ = \  \sum_{\substack{ \nu \in \ri \\ \nu \gg 0}} a_f(\nu) N(\nu)^{\frac{2k-1}{2}} e^{2\pi i \mathrm{Tr}(\nu \delta^{-1}z)},
$$
where $\fd= (\delta)$ with $\delta \gg 0$ the different of $F$ and
$$ \mathrm{Tr}(\nu \delta^{-1}z) \ = \  \sum_{i=1}^n \sigma_i(\nu \delta^{-1}z).
$$
Note that the Fourier coefficients $a_f(\nu)$ depend only on the ideal $\fm= (\nu)$. Hence we also denote $a_f(\mathfrak{m}) := a_f(\nu)$.

The Petersson inner product of two forms $f, g \in S_{2k}(\id)$ is defined by
\begin{equation*}
\left\langle f, g \right\rangle_{\id}\ = \  \int_{\Gamma_0(\id) \backslash \mh^n}  f(z) \overline{g(z)} \prod_{i=1}^{n} y_i^{2k}\frac{dx_idy_i}{y_i^2},
\end{equation*}
where $z = x + iy = (x_1+iy_1, \dots, x_n+iy_n)$.

Let $B_{2k}(\id)$ be an orthogonal basis of $S_{2k}(\id)$.
Let $$w_f\ = \  \frac{(4\pi)^{n(2k-1)}}{\Gamma^n(2k-1)D^{2k-1/2}} \left\langle f, f \right\rangle_{\id},$$ and let
\begin{equation*}
\Delta_{2k, \id} (\fm, \mathfrak{n}) \ := \  \sum_{f \in B_{2k}(\id)} w_f^{-1} \overline{a_f (\fm) } a_f (\fn).
\end{equation*}
 The following Petersson trace formula was proved by Luo \cite{Lu}.

\begin{prop} \label{Petersson}
For any $\fm= (\nu), \fn= (\mu)$ where $\nu \gg 0$ and $\mu \gg 0$ in $\mathcal{O}$,  we have
\begin{align} \label{Petersson}
\Delta_{2k, \id} (\fm, \fn) \ = \    \chi_{\nu} (\mu) + \frac{(2\pi)^n (-1)^{nk}}{D^{1/2}} \sum_{\epsilon \in U} \sum_{c \in \id^*/U}
 \frac{S(\nu, \mu \epsilon^2; c)}{|N(c)|}  \prod_{i=1}^{n}J_{2k-1} \left(\frac{4\pi \sqrt{\nu^{(i)} \mu^{(i)}} |\epsilon^{(i)}|}{|c^{(i)}|} \right),
\end{align}
where  $\chi_{\nu}$ is the characteristic function of the set $\{ \nu \epsilon^2 : \epsilon \in U \}$, and
\begin{equation*}
 S(\nu, \mu; c)\ = \  \sumstar_{a \shortmod {c}} e \left( \mathrm{Tr} \left(\frac{\nu a + \mu \overline{a}}{c} \right) \right)
\end{equation*}
is a generalized Kloosterman sum.
\end{prop}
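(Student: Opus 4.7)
The plan is to prove this Petersson-type trace formula by constructing the holomorphic Poincar\'e series attached to the totally positive element $\nu$ at the cusp $\infty$ and computing its $\mu$-th Fourier coefficient in two different ways. Define
\begin{equation*}
P_\nu(z) \ := \ \sum_{\gamma \in \Gamma_\infty \backslash \Gamma_0(\id)} N(cz+d)^{-2k} \exp\left(2\pi i\,\mathrm{Tr}(\nu \delta^{-1} \gamma z)\right),
\end{equation*}
where $\Gamma_\infty \subset \Gamma_0(\id)$ is the stabilizer of $\infty$ (generated by $\pm I$, the upper triangular unipotents over $\ri$, and the diagonal unit matrices $\mathrm{diag}(\epsilon, \epsilon^{-1})$ for $\epsilon \in U$). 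For weight $2k \geq 4$ this series converges absolutely and lies in $S_{2k}(\id)$.

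The spectral side comes from the reproducing property. Unfolding the Petersson inner product $\langle f, P_\nu \rangle_{\id}$ by moving the $\Gamma_0(\id)$-action onto $f$ and integrating over $\Gamma_\infty \backslash \mh^n$ yields an explicit constant (proportional to $\Gamma(2k-1)^n D^{1/2}/(4\pi)^{n(2k-1)}$, with the $D^{1/2}$ arising from the covolume of $\ri$ acting on $\mr^n$ by translation) times $\overline{a_f(\nu)}$. Expanding $P_\nu$ in the orthogonal basis $B_{2k}(\id)$ as $\sum_f \langle P_\nu, f\rangle_{\id}\,\langle f, f\rangle_{\id}^{-1} f$ and reading off the $\mu$-th Fourier coefficient, the left-hand side of the stated formula emerges exactly as $\Delta_{2k,\id}(\fm,\fn)$ once the definition of $w_f$ is substituted and all normalizations are matched.

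For the geometric side I would compute the $\mu$-th Fourier coefficient of $P_\nu$ directly. Split the coset sum into the identity coset plus the cosets with $c \neq 0$. The identity coset contributes $\exp(2\pi i\,\mathrm{Tr}(\nu \delta^{-1} z))$, whose $\mu$-th Fourier coefficient is nonzero exactly when $\mu \in \{\nu \epsilon^2 : \epsilon \in U\}$, giving the diagonal term $\chi_\nu(\mu)$. For $c \neq 0$ apply the Bruhat decomposition
\begin{equation*}
\begin{pmatrix} a & b \\ c & d \end{pmatrix} \ = \ \begin{pmatrix} 1 & a/c \\ 0 & 1 \end{pmatrix} \begin{pmatrix} 0 & -1/c \\ c & 0 \end{pmatrix} \begin{pmatrix} 1 & d/c \\ 0 & 1 \end{pmatrix},
\end{equation*}
parameterize by $c \in \id^*/U$ (quotient by the left action of diagonal units from $\Gamma_\infty$) together with $a, d \bmod c$, assemble the inner $a$-sum into the Kloosterman sum $S(\nu, \mu \epsilon^2; c)$, and evaluate the remaining integral componentwise by the classical $J$-Bessel integral representation to produce the product $\prod_{i=1}^n J_{2k-1}\bigl(4\pi \sqrt{\nu^{(i)}\mu^{(i)}}\,|\epsilon^{(i)}|/|c^{(i)}|\bigr)$, together with the overall factor $(2\pi)^n(-1)^{nk} D^{-1/2}$.

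The main obstacle is the careful bookkeeping of the unit group. Whereas in the classical $SL_2(\mz)$ setting $U = \{\pm 1\}$ is trivial, here the diagonal unit matrices in $\Gamma_\infty$ act on coset representatives by $(c, d) \mapsto (\epsilon^2 c, \epsilon^{-2} d)$, which is exactly what forces the $c$-sum to run over $\id^*/U$ and produces the outer $\epsilon$-sum coupled to $\mu$ inside the Kloosterman sum as $\mu \epsilon^2$. Pinning down the constants $D^{1/2}$, $(-1)^{nk}$ (one phase $(-i)^{2k}$ per embedding from the Bessel integral), and $(2\pi)^n$ is then a matter of fixing Haar measure conventions and tracking them through consistently on both sides; the analytic justifications (absolute convergence for $2k \geq 4$, interchange of summation, and identification of the Bessel integrals) are standard once the combinatorial structure is in place.
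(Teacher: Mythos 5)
The paper contains no proof of this proposition: it is quoted from Luo's paper \cite{Lu}, and Luo's argument is precisely the route you outline --- compute the $\mu$-th Fourier coefficient of the holomorphic Poincar\'e series attached to $\nu$ once spectrally (unfolding the Petersson inner product, which is what produces the weights $w_f$) and once geometrically (the $c=0$ cosets giving the diagonal term, the $c\neq 0$ cosets giving unit-twisted Kloosterman sums against products of $J_{2k-1}$ via the classical Bessel integral, with phase $(-i)^{2k}=(-1)^k$ per embedding). So in substance your proposal is the same proof, and the overall structure is sound.

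Two details need repair before it closes. First, as written $P_\nu$ is not well defined: if $\Gamma_\infty$ contains the matrices $\mathrm{diag}(\epsilon,\epsilon^{-1})$, the summand $N(cz+d)^{-2k}e^{2\pi i\,\mathrm{Tr}(\nu\delta^{-1}\gamma z)}$ is not invariant under left multiplication by them (it replaces $\nu$ by $\nu\epsilon^{2}$), so the sum over $\Gamma_\infty\backslash\Gamma_0(\id)$ depends on the choice of representatives. One must either take $\Gamma_\infty$ to be generated by the translations and $-I$ alone, or symmetrize the exponential over $\epsilon\in U$. This is exactly the unit bookkeeping you flag as the main obstacle --- it is what makes the diagonal term the characteristic function $\chi_\nu$ of $\{\nu\epsilon^{2}\}$ rather than $\delta_{\mu=\nu}$, and what puts $\mu\epsilon^{2}$ inside the Kloosterman sum --- but it must be built into the definition of the Poincar\'e series, not only into the parametrization of the $c\neq 0$ cosets; in particular the $c=0$ contribution comes from all unit-diagonal cosets, not just the identity coset. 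Second, the unfolding constant is $\Gamma(2k-1)^{n}D^{2k-1/2}N(\nu)^{-(2k-1)/2}(4\pi)^{-n(2k-1)}$, not just the covolume factor $D^{1/2}$: the Fourier frequencies are $\nu\delta^{-1}$, so each archimedean integral contributes $\Gamma(2k-1)\bigl(4\pi\sigma_i(\nu\delta^{-1})\bigr)^{-(2k-1)}$ and $N(\delta)=D$ supplies an extra $D^{2k-1}$. Tracking this correctly is what matches the paper's normalization of $w_f$ and leaves the stated prefactor $(2\pi)^{n}(-1)^{nk}D^{-1/2}$ after the Bessel evaluation. With these two points fixed, the argument is the standard one and goes through as you describe.
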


We may assume that the $c$'s in (\ref{Petersson})  are chosen satisfying $|N(c)|^{1/n} \ll |c^{(i)}| \ll |N(c)|^{1/n}$.
The Weil bound (see \cite[2.6]{Ve}) asserts that
\begin{equation} \label{WeilBound}
|S(\nu, \mu ; c)|\ \ll\ N(((\nu), (\mu), (c)))^{1/2} \tau((c))^{1/2} N((c))^{1/2}
\end{equation}
where $\tau((c))$ is the generalized divisor function.

Let $H_{2k}^\star (\mathfrak{M})$ be the set of primitive forms (newforms) of weight $(2k, \dots, 2k)$ and level $\mathfrak{M} \subset \ri$.
In view of the generalization of Atkin-Lehner's theory on newforms (see \cite{AL,Mi}), we have the orthogonal decomposition
\begin{equation}
S_{2k}(\id)\ = \ \bigoplus_{\mathfrak{L} \mathfrak{M}= \id} \bigoplus_{f \in H_{2k}^\star (\mathfrak{M})} S_{2k}(\mathfrak{L}; f)
\end{equation}
where $S_{2k}(\mathfrak{L}; f)$ is the linear space spanned by the forms $N(\ell)^{k}f(\ell z)$ with $ (\ell) | \mathfrak{L}$ and $ \ell \gg 0 $ in $\ri$.

If $f \in H_{2k}^\star (\mathfrak{M})$, then $f$ is  simultaneously an eigenfunction of all Hecke operators $T_{\mathfrak{n}}$, where $\mathfrak{n}=(\mu),  \mu \gg 0$ in $\ri$ (see \cite{Ga}).
Let $\lambda_f(\mu)$ or $\lambda (\mathfrak{n})$ denote the eigenvalue of $T_{\mathfrak{n}}$. Then $a_f(\mathfrak{n})= a_f(\ri) \lambda_f(\mathfrak{n})$. We normalize $f$ so that $a_f(\ri)=1$ and hence $a_f(\mathfrak{n})= \lambda_f(\mathfrak{n})$.
The Hecke eigenvalues satisfy the following multiplicative relation:
\begin{equation} \label{eigenvaluemultiplicative}
\lambda_f(\mu) \lambda_f(\nu)\ = \  \sum_{\substack{d | (\mu, \nu), d \gg 0 \\ ((d), \mathfrak{M})= (1)}} \lambda_f \left( \frac{\mu \nu}{ d^2} \right).
\end{equation}
The generalized Petersson-Ramanujan conjecture
\begin{align} \label{Ramanujan}
\lambda_f(\mathfrak{n}) \ll N(\mathfrak{n})^{\varepsilon},
\end{align}
is true for Hilbert modular forms (see Blasius \cite{Bl}).
Moreover,  for $\fp | \mathfrak{M}$
\begin{equation} \label{eigenvaluelevel}
\lambda_f(\fp)^2\ = \  \frac{1}{N(\fp)}.
\end{equation}
For $f \in H_{2k}^\star (\id)$, the $L$-function associated to $f$ is defined by
\begin{align*}
L(s,f) \ = \  \sum_{\substack{ \fm \subset \ri \\ \fm \ne (0)}} \frac{\lambda_f(\fm)}{N(\fm)^s}
\end{align*}
for $\mathrm{Re} (s) > 1$. Its Euler product is
\begin{align*}
  L(s, f)  \ = \  \prod_{\fp | \id}\left( 1- \frac{\lambda_f(\fp)}{N(\fp)^s}  \right)^{-1} \prod_{\fp \nmid \id} \left(   1- \frac{\alpha(\fp)}{N(\fp)^s}\right)^{-1} \left(   1- \frac{\beta(\fp)}{N(\fp)^s}\right)^{-1}.
\end{align*}
The completed $L$-function
\begin{equation*}
\Lambda (s,f) \ := \  (2\pi)^{-ns} \Gamma^n (s+ k -\tfrac{1}{2}) (N(\id) D^2)^{s/2} L(s,f)
\end{equation*}
satisfies the functional equation
\begin{equation*}
\Lambda (s,f)\ = \  \varepsilon_f \Lambda (1-s, f)
\end{equation*}
where $\varepsilon_f =  i^{2nk} \mu(\id) \lambda_f(\id) N(\id)^{1/2} = \pm 1$. Here $\mu(\id)$ is the generalized M\"obius function.

The following lemma generalizes \cite[Lemma 2.5]{ILS}.
\begin{lem} \label{normexpression}
Let $f$ be a newform of weight $(2k, \dots, 2k)$ and level $\mathfrak{M} | \id$. Then we have
\begin{equation}
 \left\langle f, f \right\rangle_{\id} \ = \  2 \left( \frac{\Gamma(2k)}{(4\pi)^{2k}}\right)^n \frac{D^{2k}}{\pi^n} \zeta_F (2) \nu(\id) \phi(\mathfrak{M}) Z(1, f),
\end{equation}
where $\zeta_F (s)$ is the Dedekind zeta function,
\begin{equation*}
\nu (\id)\ = \  [SL_2(\ri): \Gamma_0(\id)]\ = \  N(\id) \prod_{\fp | \id} (1 + \frac{1}{N(\fp)}),
\end{equation*}
\begin{equation*}
\phi(\mathfrak{M})\ = \  \prod_{\fp | \mathfrak{M}} (1- \frac{1}{N(\fp)}),
\end{equation*}
and
\begin{equation*}
Z(s, f)\ = \  \sum_{\mathfrak{n} \subset \ri} \frac{\lambda_f(\mathfrak{n}^2)}{N(\mathfrak{n})^s}.
\end{equation*}
\end{lem}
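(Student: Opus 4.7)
The plan is to extend \cite[Lemma 2.5]{ILS} to the Hilbert modular setting via the Rankin--Selberg method, after first reducing from level $\id$ to the newform's true level $\mathfrak{M}$. Since $\mathfrak{M}\mid \id$ and the integrand $|f(z)|^2 \prod y_i^{2k}$ is $\Gamma_0(\mathfrak{M})$-invariant, a fundamental domain for $\Gamma_0(\id)\backslash\mh^n$ decomposes into $[\Gamma_0(\mathfrak{M}):\Gamma_0(\id)] = \nu(\id)/\nu(\mathfrak{M})$ translates of one for $\Gamma_0(\mathfrak{M})\backslash\mh^n$, so
\begin{equation*}
\langle f,f\rangle_{\id}\ =\ \frac{\nu(\id)}{\nu(\mathfrak{M})}\,\langle f,f\rangle_{\mathfrak{M}}.
\end{equation*}
It therefore suffices to establish the identity with $\id,\nu(\id)$ replaced by $\mathfrak{M},\nu(\mathfrak{M})$.

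To compute $\langle f,f\rangle_{\mathfrak{M}}$, I would form
\begin{equation*}
I(s)\ :=\ \int_{\Gamma_0(\mathfrak{M})\backslash\mh^n} |f(z)|^2\, N(y)^{2k} E(z,s) \prod_{i=1}^n \frac{dx_i\,dy_i}{y_i^2},
\end{equation*}
where $E(z,s)=\sum_{\gamma\in\Gamma_\infty\backslash\Gamma_0(\mathfrak{M})} N(\Im\gamma z)^s$ is the real-analytic Eisenstein series at $\infty$. Unfolding to $\Gamma_\infty\backslash\mh^n$, inserting the Fourier expansion of $f$, integrating in $x$ over a fundamental parallelepiped for $\ri\backslash\mr^n$ (contributing a factor $\sqrt{D}$), and evaluating the Mellin integrals in the $y_i$ should produce
\begin{equation*}
I(s)\ =\ c_F\,\frac{\Gamma(s+2k-1)^n}{(4\pi)^{n(s+2k-1)}}\sum_{\mathfrak{n}}\frac{\lambda_f(\mathfrak{n})^2}{N(\mathfrak{n})^s},
\end{equation*}
with $c_F$ absorbing $\sqrt{D}$ and the index of the totally positive subgroup of $U$. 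Next, by the Hecke relation (\ref{eigenvaluemultiplicative}), used at primes $\fp\mid\mathfrak{M}$ together with (\ref{eigenvaluelevel}), the arithmetic Dirichlet series factors as
\begin{equation*}
\sum_{\mathfrak{n}}\frac{\lambda_f(\mathfrak{n})^2}{N(\mathfrak{n})^s}\ =\ \zeta_F(s)\,Z(s,f)\prod_{\fp\mid\mathfrak{M}}\left(1-\frac{1}{N(\fp)^s}\right),
\end{equation*}
whose bad-prime product at $s=1$ is exactly $\phi(\mathfrak{M})$.

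The final step is to compare residues at $s=1$. On the right-hand side, $\zeta_F(s)$ contributes its simple pole; on the left, $E(z,s)$ has a constant residue $\kappa_F / \mathrm{vol}(\Gamma_0(\mathfrak{M})\backslash\mh^n)$, and the standard volume formula $\mathrm{vol}(\Gamma_0(\mathfrak{M})\backslash\mh^n)=2D^{3/2}\zeta_F(2)\nu(\mathfrak{M})(4\pi^2)^{-n}$ enters. The analytic class number formula for $F$ should make $R$, $W$, the unit-group index, and the residual powers of $\sqrt{D}$ cancel, leaving
\begin{equation*}
\langle f,f\rangle_{\mathfrak{M}}\ =\ 2\left(\frac{\Gamma(2k)}{(4\pi)^{2k}}\right)^{\!n}\frac{D^{2k}}{\pi^n}\zeta_F(2)\,\nu(\mathfrak{M})\,\phi(\mathfrak{M})\,Z(1,f),
\end{equation*}
which combined with the level reduction in the first step yields the lemma. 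The main obstacle is precisely this constant-tracking: the residue of $\zeta_F$, the Eisenstein residue $\kappa_F$, the volume of $\Gamma_0(\mathfrak{M})\backslash\mh^n$, and the unit-group indices each carry factors involving $R$, $W$, $\sqrt{D}$, and $\pi$, and one must verify they conspire so that no field invariant except $D$, $\zeta_F(2)$, and $\phi(\mathfrak{M})$ survives.
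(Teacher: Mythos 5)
Your proposal is correct and follows the same Rankin--Selberg unfolding route the paper invokes via \cite{ILS} and \cite{vG} (the paper itself supplies no details beyond recording the Eisenstein residue formula). The only cosmetic difference is that you first reduce to level $\mathfrak{M}$ through the index $\nu(\id)/\nu(\mathfrak{M})$, whereas the paper's $E(z,s)$ lives on $\Gamma_0(\id)$ directly so the level dependence enters through the volume in the Eisenstein residue; these are equivalent, and your factorization of $\sum_{\mathfrak{n}}\lambda_f(\mathfrak{n})^2 N(\mathfrak{n})^{-s}$ at the primes dividing $\mathfrak{M}$ (using (\ref{eigenvaluemultiplicative}) and (\ref{eigenvaluelevel}) to produce $\phi(\mathfrak{M})$) is correct.
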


As the proof follows from the Eisenstein series $\displaystyle E(z, s)=\sum_{\gamma \in \Gamma_{\infty} \backslash \Gamma_0(\id)} N(\mathrm{Im}(\gamma z))^s$ and the standard Rankin-Selberg unfolding method, as in \cite{ILS} (also see \cite[p. 24-25]{vG}),  we omit the details here.  Note that the residue of $E(z, s)$ at $s=1$ is equal to  $\displaystyle \frac{2^{n-1} R \pi^n}{D W \mathrm{vol}(\Gamma_0(\id) \backslash \mh^n)}$.

It will be more convenient to work with the local zeta function
\begin{equation*}
Z_{\id}(s, f)\ := \  \sum_{ \mathfrak{n} | \id^{\infty}} \frac{\lambda_f(\mathfrak{n}^2)}{N(\mathfrak{n})^s}.
\end{equation*}

For $f \in H_{2k}^\star (\mathfrak{M})$ with $\mathfrak{M} | \id$, one deduces by (\ref{eigenvaluemultiplicative}) and (\ref{eigenvaluelevel}) that
\begin{equation}  \label{localzeta}
Z_{\fp}(1, f)\ = \  \begin{cases} (1+ \frac{1}{N(\fp)})^{-1} \rho_f(\fp)^{-1} & \text{if} \ \fp \nmid \mathfrak{M} \\
 (1+ \frac{1}{N(\fp)})^{-1} (1- \frac{1}{N(\fp)})^{-1} & \text{if} \ \fp | \mathfrak{M},
\end{cases}
\end{equation}
where  $\rho_f(\mathfrak{c})$ is the multiplicative function given by
\begin{equation}
\rho_f(\mathfrak{c})\ = \  \sum_{\mathfrak{b} | \mathfrak{c}} \mu(\mathfrak{b}) \left( \frac{\lambda_f(\mathfrak{b})}{\nu(\mathfrak{b})}\right)^2 \ = \  \prod_{\fp | \mathfrak{c}} \left( 1 - N(\fp) \left( \frac{\lambda_f(\fp)}{N(\fp) + 1}\right)^2 \right).
\end{equation}

Define
\begin{equation}
f_{\mathfrak{q}} (z) \ = \  \left( \frac{N(\mathfrak{q})}{\rho_f(\mathfrak{q})}\right)^2 \sum_{\substack{\mathfrak{cd=q} \\ \mathfrak{d} = (\xi_{\mathfrak{d}}), \ \xi_{\mathfrak{d} \gg 0 } } } \mu (\mathfrak{c}) \nu (\mathfrak{c})^{-1} N(\mathfrak{d})^{\frac{2k-1}{2}} f(\xi_{\mathfrak{d}}z).
\end{equation}
 Arguing as in the proof of \cite[Prop. 2.6]{ILS}, one can derive the proposition below.

\begin{prop} \label{orthogonalbasis}
Let $\id= \mathfrak{LM}$ be a squarefree ideal in $\ri$ and $f \in  H_{2k}^{\star}(\mathfrak{M})$. Then $\{ f_{\mathfrak{q}}; \mathfrak{q} | \mathfrak{L} \}$ is an orthogonal basis of $S_{2k}(\mathfrak{L}; f)$. Moreover, $\left\langle f_{\mathfrak{q}}, f_{\mathfrak{q}} \right\rangle_{\id} = \left\langle f, f \right\rangle_{\id}$.
\end{prop}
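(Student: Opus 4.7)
The plan is to mirror the proof of \cite[Prop. 2.6]{ILS} in the Hilbert modular setting, using Lemma \ref{normexpression} and the Petersson inner product. By construction $f_{\mathfrak{q}}$ is a linear combination of forms $f(\xi_{\mathfrak{d}} z)$ with $\mathfrak{d} \mid \mathfrak{q} \mid \mathfrak{L}$, so $f_{\mathfrak{q}} \in S_{2k}(\mathfrak{L}; f)$. Since the index set $\{\mathfrak{q} : \mathfrak{q} \mid \mathfrak{L}\}$ has $\tau(\mathfrak{L}) = \dim S_{2k}(\mathfrak{L}; f)$ elements, establishing orthogonality together with the norm identity automatically yields linear independence and hence the basis property.

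The main step is to compute the Gram matrix
\begin{equation*}
I(\mathfrak{d}_1, \mathfrak{d}_2) \ := \ \langle N(\mathfrak{d}_1)^{(2k-1)/2} f(\xi_{\mathfrak{d}_1} z),\, N(\mathfrak{d}_2)^{(2k-1)/2} f(\xi_{\mathfrak{d}_2} z) \rangle_{\id}
\end{equation*}
for $\mathfrak{d}_1, \mathfrak{d}_2 \mid \mathfrak{L}$, via Rankin--Selberg unfolding against the Eisenstein series $E(z,s)$ for $\Gamma_0(\id)$ that drove the proof of Lemma \ref{normexpression}, and then extracting the residue at $s=1$. The Fourier expansion with normalized eigenvalues $\lambda_f(\fn)$ then expresses $I(\mathfrak{d}_1, \mathfrak{d}_2)/\langle f, f\rangle_{\id}$ as a product of local factors: at primes $\fp \nmid \mathfrak{L}$ the factor is trivial, while at $\fp \mid \mathfrak{L}$ one obtains a finite expression in $\lambda_f(\fp)$ and $N(\fp)$ dictated by the Hecke relation (\ref{eigenvaluemultiplicative}). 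Since the computation factors prime by prime and $\id$ is squarefree, it suffices to verify the claim in the single-prime case $\mathfrak{L} = \fp$, where the Gram matrix is explicit and $2 \times 2$.

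With this matrix in hand, I would substitute the definition of $f_{\mathfrak{q}}$ into $\langle f_{\mathfrak{q}_1}, f_{\mathfrak{q}_2}\rangle_{\id}$ and expand. Off-diagonal vanishing then reduces, by multiplicativity, to a M\"obius identity of the shape $\sum_{\mathfrak{c} \mid \mathfrak{a}} \mu(\mathfrak{c})\nu(\mathfrak{c})^{-1}(\text{local Hecke weight}) = 0$ whenever $\mathfrak{a} \neq (1)$. The normalizing prefactor $(N(\mathfrak{q})/\rho_f(\mathfrak{q}))^2$ is engineered so that the diagonal entry collapses to $\langle f, f\rangle_{\id}$ upon substituting the closed form $\rho_f(\fp) = 1 - N(\fp)(\lambda_f(\fp)/(N(\fp)+1))^2$ and matching it against $Z_{\fp}(1,f)$ as computed in (\ref{localzeta}).

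The principal obstacle will be the local bookkeeping in the Rankin--Selberg step: one must choose coset representatives for $\Gamma_\infty \backslash \Gamma_0(\id)$ compatible with the $n$ real places, exploit the narrow class number one hypothesis so that each ideal $\mathfrak{d} \mid \mathfrak{L}$ has a canonical totally positive generator $\xi_{\mathfrak{d}}$ (with the unit ambiguity absorbed by the symmetry of the inner product), and verify that the local factors at $\fp \mid \mathfrak{L}$ emerge in exactly the form needed to identify $\rho_f$. Once these are in place, the remaining algebra runs parallel to \cite{ILS}, which justifies the authors' decision to omit the details.
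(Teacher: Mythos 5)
Your proposal takes exactly the route the paper (tersely) indicates: the authors simply cite the argument of \cite[Prop.\ 2.6]{ILS}, which is precisely what you have sketched, namely Rankin--Selberg unfolding against $E(z,s)$ to compute $\langle f(\xi_{\mathfrak{d}_1}z), f(\xi_{\mathfrak{d}_2}z)\rangle_{\id}$ as $\langle f,f\rangle_{\id}$ times local Hecke factors supported on primes of $\mathfrak{L}$, then M\"obius cancellation off the diagonal and matching $\rho_f$ against $Z_{\fp}(1,f)$ on the diagonal, with the basis property following from the dimension count $\tau(\mathfrak{L})=\dim S_{2k}(\mathfrak{L};f)$ once the $f_{\mathfrak{q}}$ are shown orthogonal and nonzero. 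You have also correctly isolated the only genuinely new wrinkle over \cite{ILS}, namely invoking narrow class number one to obtain totally positive generators $\xi_{\mathfrak{d}}$ and to control the unit ambiguity across the $n$ real places, so the plan is sound and coincides with the (omitted) proof the paper points to.
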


One deduces the following result from Lemma \ref{normexpression} and Proposition \ref{orthogonalbasis}.

\begin{lem} For $\fm, \fn \subset \ri$ with  $(\fm, \id)= (\fn, \id)= (1)$, we have
\begin{equation} \label{Delta1}
\Delta_{2k, \id} (\fm, \fn)\ = \  \frac{(4\pi)^n \pi^n}{2 D^{1/2} \zeta_F(2)} \cdot \frac{1}{(2k-1)^n N(\id)} \sum_{\mathfrak{LM}= \id} \sum_{f \in H_{2k}^{\star}(\mathfrak{M})} \frac{Z_{\id}(1, f)}{Z(1, f)} \overline{\lambda_f(\fm)} \lambda_f(\fn).
\end{equation}
\end{lem}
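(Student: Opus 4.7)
The plan is to substitute the explicit orthogonal basis afforded by the Atkin--Lehner decomposition and Proposition \ref{orthogonalbasis} into the definition of $\Delta_{2k,\id}(\fm,\fn)$, then reduce the resulting triple sum to the target form by a local analysis at each prime dividing $\id$.

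First I would write
\begin{equation*}
\Delta_{2k,\id}(\fm,\fn) \ = \ \sum_{\mathfrak{LM}=\id}\ \sum_{f \in H^\star_{2k}(\mathfrak{M})}\ \sum_{\mathfrak{q} \mid \mathfrak{L}} w_{f_\mathfrak{q}}^{-1}\,\overline{a_{f_\mathfrak{q}}(\fm)}\,a_{f_\mathfrak{q}}(\fn),
\end{equation*}
and use the equality $\langle f_\mathfrak{q}, f_\mathfrak{q}\rangle_\id = \langle f, f\rangle_\id$ from Proposition \ref{orthogonalbasis} to replace each $w_{f_\mathfrak{q}}$ by $w_f$. A short calculation with the Fourier expansion of $f$ shows that the normalized coefficient of $N(\mathfrak{d})^{(2k-1)/2} f(\xi_\mathfrak{d} z)$ at $\fm$ is $\lambda_f(\fm/\mathfrak{d})$ when $\mathfrak{d} \mid \fm$ and vanishes otherwise. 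Since $(\fm,\id)=(1)$ and $\mathfrak{d} \mid \mathfrak{q} \mid \mathfrak{L} \mid \id$, the only contributing term in the definition of $f_\mathfrak{q}$ is $\mathfrak{d}=\ri,\ \mathfrak{c}=\mathfrak{q}$, so
\begin{equation*}
a_{f_\mathfrak{q}}(\fm) \ = \ \left(\frac{N(\mathfrak{q})}{\rho_f(\mathfrak{q})}\right)^{\!2} \mu(\mathfrak{q})\,\nu(\mathfrak{q})^{-1}\,\lambda_f(\fm),
\end{equation*}
and similarly for $\fn$.

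Next I would invoke Lemma \ref{normexpression} to put $w_f^{-1}$ in closed form. The identity $\Gamma(2k) = (2k-1)\Gamma(2k-1)$ telescopes the gamma factors and the powers of $4\pi$ and $D$ cancel, producing
\begin{equation*}
w_f^{-1} \ = \ \frac{(4\pi)^n \pi^n}{2\,D^{1/2}\,\zeta_F(2)\,(2k-1)^n\,\nu(\id)\,\phi(\mathfrak{M})\,Z(1,f)}.
\end{equation*}
Because $\id$ is squarefree, so is every $\mathfrak{q} \mid \mathfrak{L}$ and $\mu(\mathfrak{q})^2 = 1$; the multiplicativity of $\rho_f$ and $\nu$ then factors the inner sum over $\mathfrak{q}$ as an Euler product over the primes $\fp \mid \mathfrak{L}$.

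The main obstacle is the final identification: I must check that, after collecting constants, the combined factor
\begin{equation*}
\frac{1}{\nu(\id)\,\phi(\mathfrak{M})\,Z(1,f)} \sum_{\mathfrak{q}\mid\mathfrak{L}} \frac{N(\mathfrak{q})^4}{\rho_f(\mathfrak{q})^4\,\nu(\mathfrak{q})^2}
\end{equation*}
reorganizes into $\dfrac{1}{N(\id)}\cdot\dfrac{Z_\id(1,f)}{Z(1,f)}$. This reduces to a prime-by-prime verification using (\ref{localzeta}): at each $\fp \mid \mathfrak{L}$ the Euler factor from the $\mathfrak{q}$-sum must combine with the $\fp$-part of $\nu(\id)^{-1}$ to reproduce $Z_\fp(1,f)=(1+1/N(\fp))^{-1}\rho_f(\fp)^{-1}$, while at each $\fp \mid \mathfrak{M}$ the relevant part of $\phi(\mathfrak{M})^{-1}\nu(\id)^{-1}$ must reproduce $Z_\fp(1,f)=(1+1/N(\fp))^{-1}(1-1/N(\fp))^{-1}$. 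Once these local identities are checked, multiplying by $\overline{\lambda_f(\fm)}\lambda_f(\fn)$ and rearranging constants yields the claimed formula; everything else is bookkeeping.
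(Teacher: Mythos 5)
Your strategy is exactly the intended one (and the Hilbert analogue of ILS Proposition~2.8): expand $\Delta_{2k,\id}(\fm,\fn)$ over the Atkin--Lehner pieces using the orthogonal basis $\{f_\mathfrak{q}\}$ from Proposition~\ref{orthogonalbasis}, use $\langle f_\mathfrak{q},f_\mathfrak{q}\rangle_\id=\langle f,f\rangle_\id$ to pull out $w_f^{-1}$, read off $a_{f_\mathfrak{q}}(\fm)$ from the defining expansion of $f_\mathfrak{q}$ (only $\mathfrak d=\ri$ survives because $(\fm,\id)=1$), substitute Lemma~\ref{normexpression}, and collapse. Your bookkeeping up to and including $w_f^{-1}=(4\pi)^n\pi^n/\bigl(2D^{1/2}\zeta_F(2)(2k-1)^n\nu(\id)\phi(\mathfrak M)Z(1,f)\bigr)$ and the reduction of $Z_\id(1,f)/N(\id)$ to $1/\bigl(\nu(\id)\phi(\mathfrak M)\rho_f(\mathfrak L)\bigr)$ is all correct.

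The gap is in the step you defer. Writing $a_{f_\mathfrak{q}}(\fm)=c_\mathfrak{q}\lambda_f(\fm)$, you need $\sum_{\mathfrak{q}\mid\mathfrak L}|c_\mathfrak{q}|^2=\rho_f(\mathfrak L)^{-1}$, i.e.\ $1+|c_\fp|^2=\rho_f(\fp)^{-1}$ at each $\fp\mid\mathfrak L$. But from the paper's displayed definition of $f_\mathfrak{q}$ you correctly extracted $c_\mathfrak{q}=\bigl(N(\mathfrak{q})/\rho_f(\mathfrak{q})\bigr)^2\mu(\mathfrak{q})\nu(\mathfrak{q})^{-1}$, so $|c_\fp|^2=N(\fp)^4/\bigl(\rho_f(\fp)^4(N(\fp)+1)^2\bigr)$, and the local identity fails badly (take $\lambda_f(\fp)=0$, so $\rho_f(\fp)=1$: left side $>1$, right side $=1$). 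The culprit is a misprint in the paper's formula for $f_\mathfrak{q}$: comparing with ILS Proposition~2.6, the normalization should be the square root $\bigl(N(\mathfrak{q})/\rho_f(\mathfrak{q})\bigr)^{1/2}$ rather than the square, and each term in the $\mathfrak{cd}=\mathfrak{q}$ sum should carry an extra Hecke eigenvalue $\lambda_f(\mathfrak{c})$. With that correction $c_\fp=-\bigl(N(\fp)/\rho_f(\fp)\bigr)^{1/2}\lambda_f(\fp)/(N(\fp)+1)$, so $|c_\fp|^2=N(\fp)\lambda_f(\fp)^2/\bigl(\rho_f(\fp)(N(\fp)+1)^2\bigr)$, and $1+|c_\fp|^2=\rho_f(\fp)^{-1}$ is exactly the statement $\rho_f(\fp)=1-N(\fp)\bigl(\lambda_f(\fp)/(N(\fp)+1)\bigr)^2$ (the product form of $\rho_f$; note the paper's Möbius-sum form of $\rho_f$ is also missing a factor of $N(\mathfrak b)$). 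Once you flag and fix the misprint in $f_\mathfrak{q}$, the prime-by-prime verification you describe goes through and the proof closes; as written, the check you postponed would not succeed.
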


Let
\begin{equation}
\Delta_{2k, \mathfrak{M}}^{\star}  (\fm, \fn)\ := \  \sum_{f \in H_{2k}^{\star}(\id)} \frac{Z_{\mathfrak{M}}(1, f)}{Z(1, f)} \overline{\lambda (\fm)} \lambda_f(\fn).
\end{equation}

\begin{prop} For $(\fm, \id) = (\fn, \id) = (1)$,
\begin{equation} \label{Delta2}
\Delta_{2k, \id} (\fm, \fn) \ = \   \frac{(4\pi)^n \pi^n}{2 D^{1/2} \zeta_F(2)} \cdot \frac{1}{(2k-1)^n N(\id)} \sum_{\mathfrak{LM}= \id} \sum_{\mathfrak{l} | \mathfrak{L}^{\infty}} \frac{1}{N(\mathfrak{l})} \Delta_{2k, \mathfrak{M}}^{\star} (\fm \mathfrak{l}^2, \fn),
\end{equation}
and
\begin{equation} \label{DeltaStar}
\Delta_{2k, \id}^{\star} (\fm, \fn)\ = \  \frac{2 D^{1/2} \zeta_F(2)}{ (4\pi)^n \pi^n} (2k-1)^n \sum_{\mathfrak{LM}= \id} \mu(\mathfrak{L}) N(\mathfrak{M}) \sum_{\mathfrak{l} | \mathfrak{L}^{\infty}} \frac{1}{N(\mathfrak{l})} \Delta_{2k, \mathfrak{M}} (\fm \mathfrak{l}^2, \fn).
\end{equation}
\end{prop}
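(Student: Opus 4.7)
The plan is to derive (\ref{Delta2}) directly by splitting the local zeta factor in (\ref{Delta1}), and then to obtain (\ref{DeltaStar}) from (\ref{Delta2}) by a prime-by-prime M\"obius inversion that uses the squarefree hypothesis on $\id$.

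For (\ref{Delta2}), I would start from (\ref{Delta1}). Because $\id = \mathfrak{L}\mathfrak{M}$ is squarefree, $\mathfrak{L}$ and $\mathfrak{M}$ have disjoint prime support, so
\[
\frac{Z_\id(1,f)}{Z(1,f)} \;=\; \frac{Z_{\mathfrak{M}}(1,f)}{Z(1,f)}\,Z_{\mathfrak{L}}(1,f) \;=\; \frac{Z_{\mathfrak{M}}(1,f)}{Z(1,f)}\sum_{\mathfrak{l}\mid\mathfrak{L}^\infty}\frac{\lambda_f(\mathfrak{l}^2)}{N(\mathfrak{l})}.
\]
Hilbert newform Hecke eigenvalues are real, and $(\fm,\id)=1$ implies $(\fm,\mathfrak{l})=1$, so the Hecke relation (\ref{eigenvaluemultiplicative}) collapses to $\overline{\lambda_f(\fm)}\lambda_f(\mathfrak{l}^2) = \overline{\lambda_f(\fm\mathfrak{l}^2)}$. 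Exchanging the $\mathfrak{l}$-sum with the newform sum over $H^\star_{2k}(\mathfrak{M})$ turns the latter into $\Delta^\star_{2k,\mathfrak{M}}(\fm\mathfrak{l}^2,\fn)$, which is precisely (\ref{Delta2}).

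For (\ref{DeltaStar}), I would verify the identity by substituting (\ref{Delta2}), with $\id$ replaced by $\mathfrak{M}$ (legitimate since $(\fm\mathfrak{l}^2,\mathfrak{M})=1$), into its right-hand side. The constants $C$ and $N(\mathfrak{M})$ cancel, leaving a quadruple sum indexed by $\mathfrak{L}\mathfrak{M}=\id$, $\mathfrak{L}'\mathfrak{M}'=\mathfrak{M}$, $\mathfrak{l}\mid\mathfrak{L}^\infty$, $\mathfrak{l}'\mid\mathfrak{L}'^\infty$. Setting $\mathfrak{J} = \mathfrak{L}\mathfrak{L}' = \id/\mathfrak{M}'$ and $\mathfrak{k}=\mathfrak{l}\mathfrak{l}'$ and regrouping yields
\[
\sum_{\mathfrak{M}'\mid\id}\sum_{\mathfrak{k}\mid\mathfrak{J}^\infty}\frac{\Delta^\star_{2k,\mathfrak{M}'}(\fm\mathfrak{k}^2,\fn)}{N(\mathfrak{k})}\,\Sigma_{\mathfrak{J},\mathfrak{k}},\qquad \Sigma_{\mathfrak{J},\mathfrak{k}} := \sum_{\substack{\mathfrak{L}\mathfrak{L}'=\mathfrak{J}\\ \mathfrak{l}\mathfrak{l}'=\mathfrak{k},\ \mathfrak{l}\mid\mathfrak{L}^\infty,\ \mathfrak{l}'\mid\mathfrak{L}'^\infty}}\mu(\mathfrak{L}).
\]
Since $\id$ is squarefree, so is $\mathfrak{J}$, and $\Sigma_{\mathfrak{J},\mathfrak{k}}$ factorises over the primes of $\mathfrak{J}$; each prime $\fp\mid\mathfrak{J}$ contributes $(-1) + (+1) = 0$, the two options being $\fp\in\mathfrak{L}$ versus $\fp\in\mathfrak{L}'$, with the $\fp$-part of $\mathfrak{k}$ forced onto the matching one of $\mathfrak{l},\mathfrak{l}'$. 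Hence $\Sigma_{\mathfrak{J},\mathfrak{k}}=0$ unless $\mathfrak{J}=(1)$, in which case $\mathfrak{M}'=\id$, $\mathfrak{k}=(1)$, and the unique surviving term is $\Delta^\star_{2k,\id}(\fm,\fn)$, as required.

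The only real difficulty is combinatorial bookkeeping: correctly unwinding the double decomposition $\mathfrak{L}\mathfrak{L}'\mathfrak{M}'=\id$ together with the compatible coprime factorisation $\mathfrak{l}\mathfrak{l}'=\mathfrak{k}$, and confirming the per-prime cancellation. All arithmetic inputs — reality and multiplicativity of Hecke eigenvalues and the factorisation $Z_\id = Z_\mathfrak{L} Z_\mathfrak{M}$ for squarefree $\id$ — are already supplied by the material in the excerpt, so no deeper number-theoretic ingredient is needed.
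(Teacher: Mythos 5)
Your proof is correct and follows essentially the same route the paper sketches in its one-sentence proof: you derive (\ref{Delta2}) from (\ref{Delta1}) by factoring $Z_{\id}(1,f)=Z_{\mathfrak{L}}(1,f)Z_{\mathfrak{M}}(1,f)$ for coprime $\mathfrak{L},\mathfrak{M}$ and applying the Hecke relation (\ref{eigenvaluemultiplicative}) with reality of the eigenvalues, and you then obtain (\ref{DeltaStar}) by M\"obius inversion, implemented as a direct substitution of (\ref{Delta2}) into the right-hand side followed by the per-prime cancellation $\sum_{\mathfrak{L}\mathfrak{L}'=\mathfrak{J}}\mu(\mathfrak{L})=0$ for squarefree $\mathfrak{J}\neq(1)$. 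Your argument simply supplies the combinatorial bookkeeping that the paper leaves implicit.
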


\begin{proof}
We obtain (\ref{Delta2}) from (\ref{Delta1}), (\ref{localzeta}) and (\ref{eigenvaluemultiplicative}), while (\ref{DeltaStar}) follows from (\ref{Delta1}) and M\"obius inversion.
\end{proof}

Let
\begin{equation*}
\Delta_{2k, \id}^{\star} (\fn) \ := \  \sum_{f \in H_{2k}^{\star} (\id)} \lambda_{f} (\fn).
\end{equation*}

\begin{thm} \label{DeltaStarNoWeight}
For $(\fn, \id)= (1)$, we have
\begin{equation}
\Delta_{2k, \id}^{\star} (\fn) \ = \  \frac{2 D^{1/2} \zeta_F(2)}{(4\pi)^n \pi^n} (2k-1)^n \sum_{\mathfrak{LM}= \id} \mu(\mathfrak{L}) N(\mathfrak{M}) \sum_{(\fm, \mathfrak{M})= (1)} \frac{1}{N(\fm)} \Delta_{2k, \mathfrak{M}} (\fm^2, \fn).
\end{equation}
\end{thm}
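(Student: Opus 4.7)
The plan is to invert the weighted identity (\ref{DeltaStar}), which carries the harmonic factor $Z_{\id}(1,f)/Z(1,f)$, by summing it against a Dirichlet series in the first argument that cancels this weight. The key arithmetic input is the multiplicativity of $\lambda_f(\fm^2)$ in $\fm$, which gives the factorization
\begin{equation*}
Z(s,f)\ =\ Z_{\id}(s,f)\,Z^{\id}(s,f), \qquad Z^{\id}(s,f)\ :=\ \sum_{(\fm,\id)=(1)}\frac{\lambda_f(\fm^2)}{N(\fm)^s},
\end{equation*}
so that $(Z_{\id}(1,f)/Z(1,f))\cdot Z^{\id}(1,f)=1$.

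The first step is to unfold the definition of $\Delta_{2k,\id}^{\star}(\fm^2,\fn)$ and swap the (finite) sum over $f\in H_{2k}^{\star}(\id)$ with the sum over $\fm$, which yields
\begin{equation*}
\sum_{(\fm,\id)=(1)}\frac{\Delta_{2k,\id}^{\star}(\fm^2,\fn)}{N(\fm)}\ =\ \sum_{f\in H_{2k}^{\star}(\id)}\frac{Z_{\id}(1,f)}{Z(1,f)}\,Z^{\id}(1,f)\,\lambda_f(\fn)\ =\ \Delta_{2k,\id}^{\star}(\fn).
\end{equation*}
Next, I would substitute (\ref{DeltaStar}) with $\fm$ replaced by $\fm^2$ into the left-hand side to obtain
\begin{equation*}
\Delta_{2k,\id}^{\star}(\fn)\ =\ \frac{2D^{1/2}\zeta_F(2)}{(4\pi)^n\pi^n}(2k-1)^n\sum_{\mathfrak{LM}=\id}\mu(\mathfrak{L})N(\mathfrak{M})\sum_{(\fm,\id)=(1)}\frac{1}{N(\fm)}\sum_{\mathfrak{l}\mid\mathfrak{L}^{\infty}}\frac{\Delta_{2k,\mathfrak{M}}(\fm^2\mathfrak{l}^2,\fn)}{N(\mathfrak{l})}.
\end{equation*}

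The last step is to consolidate the two inner sums via the change of variables $\fm'=\fm\mathfrak{l}$. Because $\id$ is squarefree, $\mathfrak{L}$ and $\mathfrak{M}$ are coprime, so every ideal $\fm'$ coprime to $\mathfrak{M}$ factors uniquely as $\fm\mathfrak{l}$ with $(\fm,\id)=(1)$ and $\mathfrak{l}\mid\mathfrak{L}^{\infty}$ (split $\fm'$ into its part supported on primes of $\mathfrak{L}$ and its part supported away from $\id$). Multiplicativity of the norm gives $N(\fm)N(\mathfrak{l})=N(\fm')$ and $\fm^2\mathfrak{l}^2=(\fm')^2$, so the inner double sum collapses to $\sum_{(\fm',\mathfrak{M})=(1)}N(\fm')^{-1}\Delta_{2k,\mathfrak{M}}((\fm')^2,\fn)$, which is exactly the claimed identity.

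The main technical issue will be justifying the interchange of the $\fm$-sum with the $f$-sum in the first step, since $Z^{\id}(s,f)$ is only absolutely convergent for $\mathrm{Re}(s)>1$: the Ramanujan bound (\ref{Ramanujan}) implies $|\lambda_f(\fm^2)|\ll N(\fm)^{\varepsilon}$, but the resulting series is borderline at $s=1$. The standard remedy is either to work at $\mathrm{Re}(s)>1$ and analytically continue, or to introduce a smooth cut-off in $N(\fm)$ and pass to the limit (as in \cite{ILS}); neither affects the structural identity. Once that is dispatched, the combinatorial consolidation in the last step, which crucially uses the squarefree hypothesis on $\id$, is the only substantive bookkeeping.
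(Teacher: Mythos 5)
Your proof follows the same route as the paper's: you establish $\Delta_{2k,\id}^{\star}(\fn)=\sum_{(\fm,\id)=(1)}N(\fm)^{-1}\Delta_{2k,\id}^{\star}(\fm^2,\fn)$ via the Euler factorization $Z(1,f)=Z_{\id}(1,f)Z^{\id}(1,f)$, then substitute (\ref{DeltaStar}) and reindex $\fm'=\fm\mathfrak{l}$ using the squarefree hypothesis, which is exactly what the paper compresses into ``one verifies directly \dots and the result follows from (\ref{DeltaStar}).'' Your added remark about the borderline convergence of $Z^{\id}(s,f)$ at $s=1$ is a fair point of care that the paper leaves implicit.
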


\begin{proof}
One verifies directly that $\displaystyle \Delta_{2k, \id}^{\star} (\fn) = \sum_{(\fm, \id)= (1)} \frac{1}{N(\fm)} \Delta_{2k, \id}^{\star} (\fm^2, \fn)$ and the result follows from (\ref{DeltaStar}).
\end{proof}

Let $X, Y \geq 1$, be two parameters. We will choose $X$ and $Y$ as a small power of $kN(\id)$ later. We write
\begin{equation*}
\Delta_{2k, \id}^{\star} (\fn) \ := \  \Delta_{2k, \id}' (\fn) + \Delta_{2k, \id}^{\infty} (\fn),
\end{equation*}
where
\begin{equation}
\Delta_{2k, \id}' (\fn) \ := \  \frac{2 D^{1/2} \zeta_F(2)}{(4\pi)^n \pi^n} (2k-1)^n \sum_{\substack{\mathfrak{LM}= \id \\ N(\mathfrak{L}) \le X}} \mu(\mathfrak{L}) N(\mathfrak{M}) \sum_{\substack{ (\fm, \mathfrak{M})= (1) \\ N(\fm) \le Y}} \frac{1}{N(\fm)} \Delta_{2k, \mathfrak{M}} (\fm^2, \fn)
\end{equation}
and $\Delta_{2k, \id}^{\infty} (\fn)$ is the complementary sum.

Assume the sequence $\{a_{\mathfrak{q}} \}_{\mathfrak{q} \subset \ri}$ (indexed by ideals) satisfying
\begin{equation} \label{sequencecondition}
\sum_{(\mathfrak{q}, \fn \id)\ = \  (1)} \lambda_{f} (\mathfrak{q}) a_{\mathfrak{q}}\ \ll\ (N(\fn \id) k)^{\varepsilon}
\end{equation}
for all $f \in H_{2k}^{\star} (\mathfrak{M})$ with $\mathfrak{M} | \id$ such that the implied constant depends only on $\varepsilon$.
We will choose $a_{\mathfrak{q}}= N(\fp)^{-1/2} \log N(\fp)$ if $\mathfrak{q} = \fp, \ N(\fp) \le Q$ and $a_{\mathfrak{q}}= 0$ otherwise, provided $\log Q \ll \log (kN(\id))$; this choice satisfies (\ref{sequencecondition}).

\begin{lem} \label{Deltainfty}
Suppose $(\fn, \id)=1$. For any sequence $\{a_{\mathfrak{q}} \}_{q \subset \ri}$ satisfies $(\ref{sequencecondition})$, we have
\begin{equation}
\sum_{(\mathfrak{q}, \fn \id)= (1)} \Delta_{2k, \id}^{\infty} (\fn \mathfrak{q}) a_{\mathfrak{q}}\ \ll\ k^{n} N(\id) (X^{-1} + Y^{-1/2}) (N(\fn \id)k XY)^{\varepsilon}.
\end{equation}
\end{lem}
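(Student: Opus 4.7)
The plan is to adapt the argument of \cite[Lemma 2.10]{ILS} to the Hilbert modular setting. Starting from Theorem \ref{DeltaStarNoWeight}, the complementary sum $\Delta^\infty_{2k,\id}(\fn\mathfrak{q})$ is exactly the part of the double sum over $\mathfrak{LM}=\id$ and $(\fm,\mathfrak{M})=(1)$ in the tail region where $N(\mathfrak{L})>X$ or $N(\fm)>Y$. Interchanging to bring $\sum_\mathfrak{q} a_\mathfrak{q}$ innermost gives
\begin{equation*}
\sum_{(\mathfrak{q},\fn\id)=(1)}\Delta_{2k,\id}^\infty(\fn\mathfrak{q})\,a_\mathfrak{q} \ =\ C\,(2k-1)^n\sum_{\substack{\mathfrak{LM}=\id\\ \mathrm{tail}}}\mu(\mathfrak{L})N(\mathfrak{M})\sum_{\substack{(\fm,\mathfrak{M})=(1)\\ \mathrm{tail}}}\frac{T_\mathfrak{M}(\fm,\fn)}{N(\fm)},
\end{equation*}
where $T_\mathfrak{M}(\fm,\fn):=\sum_{\mathfrak{q}}\Delta_{2k,\mathfrak{M}}(\fm^2,\fn\mathfrak{q})\,a_\mathfrak{q}$ and $C$ absorbs the absolute constant from Theorem \ref{DeltaStarNoWeight}.

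Next, apply the Petersson trace formula (\ref{Petersson}) to each $\Delta_{2k,\mathfrak{M}}(\fm^2,\fn\mathfrak{q})$ and split $T_\mathfrak{M}=T^{\mathrm{diag}}_\mathfrak{M}+T^{\mathrm{KS}}_\mathfrak{M}$ into diagonal and Kloosterman pieces. The diagonal $\chi_{\fm^2}(\fn\mathfrak{q})$ vanishes unless $\fn\mathfrak{q}=\fm^2$ as ideals, which pins down a unique $\mathfrak{q}=\fm^2\fn^{-1}$ (modulo the $U/U^2$ unit ambiguity) whenever this quotient is integral and coprime to $\fn\id$. Combined with the polylogarithmic pointwise bound on $|a_\mathfrak{q}|$ implicit in the prime-supported choice compatible with (\ref{sequencecondition}), the full diagonal contribution sums to $O_\varepsilon((kN(\fn\id)XY)^\varepsilon)$ and is absorbed into the error.

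For $T^{\mathrm{KS}}_\mathfrak{M}$ insert the Weil bound (\ref{WeilBound}) (with $(\fm,\mathfrak{M})=(1)$ trivialising the gcd factor) together with the standard Bessel estimate: $J_{2k-1}(x)$ decays rapidly for $x<2k$ and is $\ll x^{-1/2}$ for $x\ge 2k$. Choosing $c$-representatives with $|c^{(i)}|\asymp|N(c)|^{1/n}$, the $n$-fold Bessel product effectively truncates the $c$-sum to $|N(c)|\ll N(\fm)\sqrt{N(\fn\mathfrak{q})}/k^n$, yielding a $k^{-n}$ gain that offsets the $(2k-1)^n$ prefactor. Executing the two tail sums then extracts the advertised savings: $\sum_{N(\mathfrak{L})>X}N(\mathfrak{M})\ll N(\id)X^{-1}\log N(\id)$ produces the $X^{-1}$ factor, and after a partial-summation/symmetric-square regrouping of the $\fm$-sum parallel to the argument in \cite[\S 2]{ILS}, the $\fm$-tail delivers the $Y^{-1/2}$ factor. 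Combining with the trivial $(2k-1)^nN(\id)$ prefactor gives the stated bound.

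The main obstacle is precisely this last $Y^{-1/2}$ extraction: naive term-by-term bounding of $T^{\mathrm{KS}}_\mathfrak{M}$ produces an $\fm$-series of order $N(\fm)^{-1/2+\varepsilon}$, which diverges over the tail, so the proof must exploit the finer averaging in $\fm$ coming from the $\fm^2$-structure of the Kloosterman inputs (or equivalently from symmetric-square cancellation) rather than estimating each $T^{\mathrm{KS}}_\mathfrak{M}$ in isolation. Beyond this, the Weil--Bessel bookkeeping across the $n$-fold product of real embeddings requires care, in particular choosing a fundamental domain for the unit-group sum compatible with the balanced representatives $|c^{(i)}|\asymp|N(c)|^{1/n}$. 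Hypothesis (\ref{sequencecondition}) itself plays no cancellation role here; it enters only through the uniform pointwise bound on $|a_\mathfrak{q}|$ that the intended prime-supported weight satisfies.
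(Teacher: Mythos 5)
Your approach diverges substantially from the paper's, and there are two genuine gaps. The paper does \emph{not} apply the Petersson formula in this lemma at all. Instead, using Theorem \ref{DeltaStarNoWeight} together with \eqref{Delta2}, it re-expands $\Delta^{\infty}_{2k,\id}(\fn\mathfrak{q})$ as a weighted sum over newforms $f\in H^{\star}_{2k}(\mathfrak{M})$ with $\mathfrak{KLM}=\id$, isolating the $\fm$-tail into a factor
\begin{equation*}
R_f(\mathfrak{KM};Y)=\frac{Z_{\mathfrak{KM}}(1,f)}{Z(1,f)}\sum_{\substack{(\fm,\mathfrak{KM})=(1)\\ N(\fm)>Y}}\frac{\lambda_f(\fm^2)}{N(\fm)},
\end{equation*}
which is then bounded by $Y^{-1/2}(kN(\mathfrak{KM})Y)^{\varepsilon}$ \emph{directly from GRH for $L(s,\mathrm{sym}^2 f)$}. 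You identify exactly this $Y^{-1/2}$ extraction as your main obstacle and gesture at a ``partial-summation/symmetric-square regrouping,'' but you never actually invoke the symmetric-square GRH that the theorem hypothesizes and that the paper uses to close the estimate. Attempting to get the $\fm$-tail savings purely from Weil and Bessel bounds on the Kloosterman side, as you propose, does not converge; the hypothesis on $L(s,\mathrm{sym}^2 f)$ is not optional decoration here, it is the whole mechanism.

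The second gap is your final claim that the hypothesis \eqref{sequencecondition} ``plays no cancellation role here; it enters only through the uniform pointwise bound on $|a_\mathfrak{q}|$.'' This is wrong. In the paper's proof, after multiplicativity gives $\lambda_f(\fn\mathfrak{q})=\lambda_f(\fn)\lambda_f(\mathfrak{q})$, the inner $\mathfrak{q}$-sum is $\lambda_f(\fn)\sum_{(\mathfrak{q},\fn\id)=(1)}\lambda_f(\mathfrak{q})a_\mathfrak{q}$, and \eqref{sequencecondition} is invoked precisely as a cancellation bound on that weighted Hecke sum. A pointwise bound $|a_\mathfrak{q}|\ll N(\mathfrak{q})^{\varepsilon}$ would give nothing near the needed savings. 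So even granting the rest of your Petersson/Kloosterman bookkeeping, the way you treat the $\mathfrak{q}$-averaging would lose the very cancellation the statement is designed to exploit.
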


\begin{proof}
By Theorem \ref{DeltaStarNoWeight} and (\ref{Delta2}), we have
\begin{equation*}
\Delta_{2k, \id}^{\infty} (\fn \mathfrak{q}) \ = \  \sum_{\substack{\mathfrak{KLM}= \id \\ N(\mathfrak{L}) > X}} \mu (\mathfrak{L}) \sum_{f \in H_{2k}^{\star}(\mathfrak{M})} \lambda_{f} (\fn \mathfrak{q}) +  \sum_{\substack{\mathfrak{KLM}= \id \\ N(\mathfrak{L}) \le X}} \mu (\mathfrak{L}) \sum_{f \in H_{2k}^{\star}(\mathfrak{M})} \lambda_{f} (\fn \mathfrak{q}) R_f(\mathfrak{KM} ; Y)
\end{equation*}
where
\begin{equation*}
R_{f} (\mathfrak{KM}; Y) \ = \  \frac{Z_{\mathfrak{KM}} (1, f)}{Z (1, f)} \sum_{\substack{(\fm, \mathfrak{KM}) = (1) \\ N(\fm) > Y}} \frac{1}{N(\fm)} \lambda_f(\fm^2).
\end{equation*}
By GRH for $L(s, \mathrm{sym}^2f)$, we have
\begin{equation*}
R_f(\mathfrak{KM}; Y)\ \ll\ Y^{-1/2} (k N(\mathfrak{KM}Y))^{\varepsilon}.
\end{equation*}
Moreover, we have $|\lambda_f(\fn)| \ll N(\fn)^{\varepsilon}$. Hence the lemma follows by the above estimates and (\ref{sequencecondition}).
\end{proof}

\begin{prop} We have
\begin{equation} \label{dimension}
|H_{2k}^{\star}(\id)| \ = \  \frac{2D^{1/2} \zeta_F(2)}{(4\pi)^n \pi^n} (2k-1)^n N(\id) \prod_{\fp | \id} \left( 1- \frac{1}{N(\fp)}\right) + O( k^{\frac{3}{5}n} N(\id)^{\frac{3}{5}} (kN(\id))^{\varepsilon}).
\end{equation}
\end{prop}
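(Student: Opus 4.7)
The key observation is that since $\lambda_f(\ri)=1$ for every newform $f$, we have the identity
\begin{equation*}
|H_{2k}^\star(\id)| \ = \  \Delta_{2k,\id}^\star(\ri),
\end{equation*}
so the problem reduces to asymptotically evaluating the right-hand side. The plan is to write $\Delta_{2k,\id}^\star(\ri) = \Delta_{2k,\id}'(\ri) + \Delta_{2k,\id}^\infty(\ri)$ using the decomposition introduced before Lemma \ref{Deltainfty}, extract the main term from $\Delta'$ via the Petersson trace formula, and bound the error pieces using the Weil bound, Bessel function estimates, and Lemma \ref{Deltainfty}.

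First I would handle the tail: apply Lemma \ref{Deltainfty} with $\fn=\ri$ and with $a_\mathfrak{q}$ the indicator of $\mathfrak{q}=\ri$, which trivially satisfies \eqref{sequencecondition}. This yields
\begin{equation*}
\Delta_{2k,\id}^\infty(\ri) \ \ll \ k^n N(\id)\bigl(X^{-1}+Y^{-1/2}\bigr)(kN(\id)XY)^{\varepsilon}.
\end{equation*}
Next I would evaluate $\Delta_{2k,\id}'(\ri)$ by inserting the Petersson formula (Proposition \ref{Petersson}) into each $\Delta_{2k,\mathfrak{M}}(\fm^2,\ri)$. The identity contribution $\chi_\nu(1)$ is $1$ exactly when $\fm=\ri$ and $0$ otherwise, so the diagonal produces
\begin{equation*}
\frac{2D^{1/2}\zeta_F(2)}{(4\pi)^n\pi^n}(2k-1)^n \sum_{\substack{\mathfrak{LM}=\id\\ N(\mathfrak{L})\le X}}\mu(\mathfrak{L})N(\mathfrak{M}).
\end{equation*}
Completing the sum over all $\mathfrak{LM}=\id$ introduces an error of size $O(N(\id)^{1+\varepsilon}/X)$ (using $\tau(\id)\ll N(\id)^\varepsilon$) and, via $N(\mathfrak{M})=N(\id)/N(\mathfrak{L})$, yields exactly the main term
$N(\id)\prod_{\fp\mid\id}\bigl(1-1/N(\fp)\bigr)$.

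The remaining contribution to $\Delta'_{2k,\id}(\ri)$ comes from the Kloosterman piece of each $\Delta_{2k,\mathfrak{M}}(\fm^2,\ri)$. For this I would use the Weil bound \eqref{WeilBound} to obtain $|S(\nu,\epsilon^2;c)|\ll N((c))^{1/2+\varepsilon}$, together with the uniform Bessel bound
\begin{equation*}
J_{2k-1}(x) \ \ll \ \min\!\left(\tfrac{x^{2k-1}}{(2k-1)!},\, x^{-1/2}\right),
\end{equation*}
applied coordinate-wise to the product $\prod_{i=1}^n J_{2k-1}\bigl(4\pi\sqrt{\nu^{(i)}}|\epsilon^{(i)}|/|c^{(i)}|\bigr)$; the unit sum is controlled by normalizing $|\epsilon^{(i)}|\asymp 1$ using the standard fundamental-domain choice, and $|c^{(i)}|\asymp N(c)^{1/n}$. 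Summing over $c\in\mathfrak{M}^\ast/U$ (with $\mathfrak{M}\mid(c)$, hence $N(c)\ge N(\mathfrak{M})$), over $\fm$ with $N(\fm)\le Y$, and over the divisors $\mathfrak{LM}=\id$ with $N(\mathfrak{L})\le X$ gives a Kloosterman error that is polynomial in $X$, $Y$, $k^n$ and $N(\id)$. Finally I would balance the three error contributions $N(\id)^{1+\varepsilon}/X$, $k^n N(\id)(X^{-1}+Y^{-1/2})^{}(\cdot)^\varepsilon$, and the Kloosterman bound by choosing $X$ and $Y$ as suitable small powers of $k^nN(\id)$; the optimum produces an error of $O(k^{3n/5}N(\id)^{3/5}(kN(\id))^{\varepsilon})$.

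The main obstacle is step three: obtaining a clean bound for the truncated Kloosterman sum. The transition range of $J_{2k-1}$ at argument size $\asymp k$ has to be tracked in each of the $n$ archimedean factors simultaneously, and one has to exploit that $c$ is constrained to the principal-ideal representatives of $\mathfrak{M}^\ast/U$. Once this estimate is in place the optimization of $X$ and $Y$ is routine and yields the stated error term.
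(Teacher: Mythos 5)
Your overall plan matches the paper's proof exactly: write $|H_{2k}^\star(\id)| = \Delta_{2k,\id}^\star(\ri) = \Delta'_{2k,\id}(\ri)+\Delta^\infty_{2k,\id}(\ri)$, bound the tail with Lemma \ref{Deltainfty}, insert the Petersson formula into $\Delta'_{2k,\id}(\ri)$, recognize the diagonal as the main term, complete the divisor sum at a cost of $O(N(\id)^{1+\varepsilon}/X)$, bound the Kloosterman piece with the Weil and Bessel estimates, and optimize $X=Y^{1/2}$. That much is right.

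The gap is exactly where you flagged the difficulty: your handling of the unit sum is incorrect, and the paper's resolution is different and essential. In Luo's trace formula the sum $\sum_{\epsilon\in U}$ is over the \emph{entire} unit group (infinite for $n\ge 2$); it cannot be restricted to a fundamental domain or to $|\epsilon^{(i)}|\asymp 1$ — that normalization only applies to the representatives $c\in\mathfrak{M}^\ast/U$, not to $\epsilon$. Worse, a naive per-coordinate application of the small-argument bound $J_{2k-1}(x_i)\ll (ex_i/4k)^{2k-1}$ produces a factor $\prod_i |\epsilon^{(i)}|^{2k-1}=1$ (since $\epsilon$ is a unit), so the resulting bound on $\prod_i J_{2k-1}(x_i)$ is \emph{independent} of $\epsilon$ and the unit sum diverges outright; meanwhile your alternative $x^{-1/2}$ bound gives no help because in the dominant range $|N(c)|\ge N(\id)/X$ the arguments $x_i$ are small. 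The paper avoids this by the interpolation $J_{2k-1}(x)\ll(ex/4k)^{2k-1-\eta}$, choosing $\eta=0$ in coordinates with $|\epsilon^{(i)}|\le 1$ (to retain the $k^{-1}$ saving) and $0<\eta<1$ in coordinates with $|\epsilon^{(i)}|>1$, which produces the extra factor $\prod_{|\epsilon^{(i)}|>1}|\epsilon^{(i)}|^{-\eta}$ whose sum over $U$ converges by Luo's estimate \eqref{SumUnits}. Without this $\eta$-trick (or an equivalent), your Kloosterman bound does not close, and the subsequent optimization $X=Y^{1/2}=k^{2n/5}N(\id)^{3/5}$ has nothing to balance against.
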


\begin{proof} Note
\begin{equation*}
|H_{2k}^{\star}(\id)| \ = \  \Delta_{2k, \id}^{\star} (\ri) \ = \  \Delta_{2k, \id}' (\ri) + \Delta_{2k, \id}^{\infty} (\ri).
\end{equation*}
By Lemma \ref{Deltainfty}, $\Delta_{2k, \id}^{\infty} (\ri) \ll k^n N(\id) (X^{-1} + Y^{-1/2}) (N(\id)kXY)^{\varepsilon}$.
By Proposition \ref{Petersson},
\begin{equation*}
\Delta_{2k, \id}' (\ri) \ = \  \frac{2D^{1/2} \zeta_F(2)}{(4\pi)^n \pi^n} (2k-1)^n \sum_{\substack{\mathfrak{LM}= \id \\ N(\mathfrak{L}) \le X}} \mu(\mathfrak{L}) N(\mathfrak{M}) + \frac{2(-1)^{nk} \zeta_F(2)}{(2\pi)^n} (2k-1)^n  E_{2k, \id}
\end{equation*}
where
\begin{equation*}
 E_{2k, \id}\ = \  \sum_{\substack{\mathfrak{LM}= \id \\ N(\mathfrak{L}) \le X}} \mu(\mathfrak{L}) N(\mathfrak{M}) \sum_{ \substack{(\fm, \mathfrak{M})= (1) \\ \fm= (\nu), \  \nu \gg 0 \\ N(\fm) \le Y}}
 \frac{1}{N(\fm)} \sum_{\epsilon \in U} \sum_{c \in \mathfrak{M}^{*}/U}
  \frac{S(\nu,  \epsilon^2; c)}{|N(c)|} \prod_{i=1}^n J_{2k-1} \left(  \frac{4\pi \sqrt{\nu^{(i)} } |\epsilon^{(i)}|}{|c^{(i)}|}\right).
\end{equation*}
Note that $$\sum_{\substack{\mathfrak{LM}= \id \\ N(\mathfrak{L}) \le X}} \mu(\mathfrak{L}) N(\mathfrak{M}) \ = \  N(\id) \prod_{\fp | \id} \left( 1- \frac{1}{N(\fp)}\right) + O \left( \frac{N(\id)^{1+\varepsilon}}{X}\right).$$
For all $x> 0$ we have $J_{2k-1}(x) \ll 1$. Moreover, from the integral representation (see \cite [8.411.10]{GR})
\begin{equation*}
J_{2k-1} (x)\ = \  \frac{1}{\Gamma(2k - \frac{1}{2}) \Gamma (\frac{1}{2})} \left( \frac{x}{2}\right)^{2k-1} \int_{-1}^{1} e^{ixt} (1-t^2)^{2k-3/2}dt
\end{equation*}
and Stirling's formula, we deduce
\begin{equation*}
J_{2k-1}(x)\ \ll\ \left( \frac{ex}{4k}\right)^{2k-1}.
\end{equation*}
Hence we have
\begin{equation} \label{JBesselEstimate}
J_{2k-1}(x)\ \ll\ \min \left\{ 1,  \left( \frac{ex}{4k}\right)^{2k-1} \right\}\  \ll\  \left( \frac{ex}{4k}\right)^{2k-1-\eta}  \quad \text{for} \ 0 \le \eta < 1.
\end{equation}
For $x =  \frac{4\pi \sqrt{\nu^{(i)}} |\epsilon^{(i)}|}{|c^{(i)}|}$, we choose $\eta =0$ if $|\epsilon^{(i)}| \le 1$ and $0 < \eta < 1$ if $|\epsilon^{(i)}| > 1$. Then we have
\begin{align} \label{ProdBessel}
\prod_{i=1}^{n} J_{2k-1} \left(  \frac{4\pi \sqrt{\nu^{(i)}} |\epsilon^{(i)}|}{|c^{(i)}|} \right)
&\ \ll\  \left( \frac{(e\pi)^n \sqrt{N(\nu)}}{k^n |N(c)|}\right)^{2k-1} |N(c)|^{\eta} \prod_{|\epsilon^{(i)}| > 1} |\epsilon^{(i)}|^{-\eta}  \nonumber \\
& \ \ll\   \frac{ \sqrt{N(\nu)}}{k^n |N(c)|} |N(c)|^{\eta} \prod_{|\epsilon^{(i)}| > 1} |\epsilon^{(i)}|^{-\eta}
\end{align}
provided $\displaystyle \frac{(e\pi)^n \sqrt{N(\nu)}}{k^n|N(c)|} \le 1$.
Note that $N(\nu) \le Y$ and $\displaystyle |N(c)| \geq N(\mathfrak{M}) \geq \frac{N(\id)}{X}$.
Hence $\displaystyle \frac{(e\pi)^n \sqrt{N(\nu)}}{k^n|N(c)|} \le \frac{(e\pi)^nXY^{1/2}}{k^nN(\id)}$.
By (\ref{ProdBessel}) and (\ref{WeilBound}), we have
\begin{align*}
E_{2k, \id} &\ \ll\ \sum_{\substack{\mathfrak{LM}= \id \\ N(\mathfrak{L}) \le X}}  N(\mathfrak{M}) \sum_{ \substack{(\fm, \mathfrak{M})= (1) \\ \fm= (\nu), \  \nu \gg 0 \\ N(\fm) \le Y}}
 \frac{1}{N(\fm)} \sum_{\epsilon \in U} \sum_{c \in \mathfrak{M}^{*}/U} \frac{|N(c)|^{\frac{1}{2}+ \varepsilon}}{|N(c)|} \frac{ \sqrt{N(\nu)}}{k^n |N(c)|} |N(c)|^{\eta} \prod_{|\epsilon^{(i)}| > 1} |\epsilon^{(i)}|^{-\eta} \\
 &\ \ll\ k^{-n} N(\id)^{-1/2 + \eta + \varepsilon} (XY)^{1/2}
\end{align*}
where for the last inequality we used (see \cite[p. 136]{Lu})
\begin{equation} \label{SumUnits}
\sum_{\epsilon \in U} \prod_{|\epsilon^{(i)}| > 1} |\epsilon^{(i)}|^{-\eta} \ <\ \infty.
\end{equation}
We take $X=Y^{1/2}= k^{\frac{2}{5}n}N(\id)^{\frac{3}{5}}$ and this complete the proof.
\end{proof}

\section{The explicit formula}
Let $R:= c_f=k^{2n}N(\id)$.
Following from \cite[section 4]{ILS}, we have the explicit formula
 \begin{multline*}
\sum_{\gamma_{f}} \phi \left(\frac{\gamma_{f}}{2\pi} \log R \right)
\ = \  \frac{\widehat{\phi}(0)}{\log R} \left( \log N(\id) + 2 \log D -2n \log 2\pi \right)  \\
+ \frac{2n}{\log R} \int_{-\infty}^{\infty} \frac{\Gamma'}{\Gamma} \left( k + \frac{2\pi i t}{\log R}\right) \phi (t) dt
- 2 \sum_{\fp} \sum_{\nu=1}^{\infty} \widehat{\phi} \left( \frac{\nu \log N(\fp)}{\log R}\right)
   \frac{a_{f}(\fp^{\nu}) \log N(\fp)}{N(\fp)^{\nu/2}\log R},
\end{multline*}
where $a_f(\fp^{\nu}) = \alpha (\fp)^{\nu} + \beta (\fp)^{\nu}$ for $\fp \nmid \id$ and $a_f(\fp^{\nu}) = \lambda_f^{\nu} (\fp)$ for $\fp | \id$.

\begin{prop} \label{densityformula}
Let $\phi$ be an even Schwartz function on $\mr$ whose Fourier transform $\widehat{\phi}$ has compact support. Then for $f \in H_{2k}^{\star} (\id)$ we have
\begin{equation}
D(f ; \phi ) \ = \  \widehat{\phi} (0) \frac{\log (k^{2n} N(\id))}{\log R} + \frac{1}{2} \phi (0) - P(f; \phi) + O \left(  \frac{\log \log (k^{2n} N(\id))}{\log R} \right),
\end{equation}
where
\begin{equation*}
P(f; \phi) \ := \  \sum_{\fp \nmid \id} \widehat{\phi} \left( \frac{\log N(\fp)}{\log R} \right) \frac{2 \lambda_f(\fp) \log N(\fp)}{N(\fp)^{1/2} \log R}.
\end{equation*}
\end{prop}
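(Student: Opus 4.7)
The plan is to unpack the stated explicit formula term by term, matching each of the three displayed main terms with a specific piece of that formula and absorbing everything else into the $O(\log\log R/\log R)$ error.

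For the archimedean piece I apply Stirling's asymptotic for the digamma function, $\Gamma'/\Gamma(k + 2\pi it/\log R) = \log k + O\bigl(1/k + |t|/(k\log R)\bigr)$, and integrate against the Schwartz function $\phi$ to obtain $(2n/\log R)\int \Gamma'/\Gamma\cdot\phi\,dt = 2n\widehat{\phi}(0)\log k/\log R + O(1/\log R)$. Combining with the conductor contribution $\widehat{\phi}(0)(\log N(\id)+2\log D-2n\log 2\pi)/\log R$ and absorbing the bounded constants $2\log D$ and $-2n\log 2\pi$ into the error (since $D$ and $n$ are fixed) yields the first main term $\widehat{\phi}(0)\log(k^{2n}N(\id))/\log R$.

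For the prime sum with $\nu=1$, the unramified contribution is exactly $-P(f;\phi)$, while ramified primes $\fp\mid\id$ satisfy $|\lambda_f(\fp)| = N(\fp)^{-1/2}$ by \eqref{eigenvaluelevel}, producing at most $(\log R)^{-1}\sum_{\fp\mid\id}\log N(\fp)/N(\fp)\ll\log\log N(\id)/\log R$. For $\nu\ge 3$, the Ramanujan bound \eqref{Ramanujan} gives $|a_f(\fp^\nu)|\ll N(\fp)^\varepsilon$; summing the geometric series in $\nu$ and then over primes contributes $O(1/\log R)$. The source of $\phi(0)/2$ is the $\nu=2$ term. For $\fp\nmid\id$ the Satake identity combined with \eqref{eigenvaluemultiplicative} gives $a_f(\fp^2) = \alpha(\fp)^2+\beta(\fp)^2 = \lambda_f(\fp)^2-2 = \lambda_f(\fp^2)-1$. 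Splitting at the ``$-1$", the first piece is
\begin{equation*}
\frac{2}{\log R}\sum_{\fp\nmid\id}\widehat{\phi}\!\left(\frac{2\log N(\fp)}{\log R}\right)\frac{\log N(\fp)}{N(\fp)},
\end{equation*}
which by the Landau prime ideal theorem, partial summation, and the substitution $u=2\log x/\log R$ (together with $\int_0^\infty\widehat{\phi}(u)\,du=\phi(0)/2$) evaluates to $\phi(0)/2+O(1/\log R)$. The second piece, noting $\lambda_f(\fp^2)=\lambda_{\mathrm{sym}^2 f}(\fp)$ at unramified primes, is handled by the explicit formula for $L(s,\mathrm{sym}^2 f)$ under the assumed GRH, giving $O(\log\log R/\log R)$. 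The ramified $\nu=2$ contribution is $O(1/\log R)$ since $\lambda_f(\fp)^2=1/N(\fp)$.

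The principal obstacle is exactly the symmetric-square sum above: without GRH for $L(s,\mathrm{sym}^2 f)$ one cannot show $\sum_{\fp}\widehat{\phi}(2\log N(\fp)/\log R)\lambda_{\mathrm{sym}^2 f}(\fp)\log N(\fp)/(N(\fp)\log R)$ is negligible, and the $\phi(0)/2$ main term would be polluted. Every other step is routine bookkeeping with Stirling, Ramanujan, and the prime ideal theorem for $F$, which is why the theorem's hypothesis is formulated with GRH for both $L(s,f)$ (used in the explicit formula itself) and $L(s,\mathrm{sym}^2 f)$ (used only here).
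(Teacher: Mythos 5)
Your proposal is correct and follows essentially the same route as the paper's proof: you start from the same explicit formula, treat the digamma/conductor terms with Stirling to produce $\widehat{\phi}(0)\log(k^{2n}N(\id))/\log R$, isolate the $\nu=1$ unramified sum as $-P(f;\phi)$, obtain $\phi(0)/2$ from the constant part of the $\nu=2$ term via Landau's prime ideal theorem and partial summation, bound the $\lambda_f(\fp^2)$ sum using GRH for $L(s,\mathrm{sym}^2 f)$, and discard the $\nu\ge 3$ and ramified contributions as $O(1/\log R)$ or $O(\log\log N(\id)/\log R)$. The only cosmetic difference is that you invoke the Ramanujan bound for $\nu\ge 3$ where the paper simply writes $|\alpha(\fp)|,|\beta(\fp)|\le 1$ (equivalent statements), and you are slightly more explicit about identifying $\lambda_f(\fp^2)$ with $\lambda_{\mathrm{sym}^2 f}(\fp)$; these do not change the argument.
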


\begin{proof}
We have (see \cite[p.86]{ILS})
\begin{equation*}
\int_{-\infty}^{\infty} \frac{\Gamma'}{\Gamma} \left(  k + \frac{2\pi i t}{\log R} \right) \phi (t) dt \ = \   \widehat{\phi} (0) \log k + O(1).
\end{equation*}
Since $|\alpha (\fp)|, |\beta (\fp)| \le 1$, we have $|a_f(\fp^{\nu})| \le 2$ for all $\nu$. Hence for $\nu \geq 3$,
\begin{equation}
 \sum_{\fp} \sum_{\nu \geq 3} \widehat{\phi} \left( \frac{\nu \log N(\fp)}{\log R}\right) \frac{a_f(\fp^{\nu}) \log N(\fp)}{N(\fp)^{\nu/2} \log R} \ll \frac{1}{\log R}.
\end{equation}
Recall the relation $\alpha (\fp) + \beta (\fp)= \lambda_f(\fp), \ \alpha^2(\fp) + \beta^2(\fp)= \lambda_f(\fp^2) -1$ and (\ref{eigenvaluelevel}), we deduce
\begin{multline*}
D(f ; \phi ) \ = \  \widehat{\phi} (0) \frac{\log (k^{2n} N(\id))}{\log R}
- \sum_{\fp \nmid \id} \widehat{\phi} \left( \frac{\log N(\fp)}{\log R} \right) \frac{2 \lambda_f(\fp) \log N(\fp)}{N(\fp)^{1/2} \log R} \\
-\sum_{\fp \nmid \id} \widehat{\phi} \left( \frac{2\log N(\fp)}{\log R} \right) \frac{2 \lambda_f(\fp^2) \log N(\fp)}{N(\fp) \log R}
+ \sum_{\fp \nmid \id} \widehat{\phi} \left( \frac{2\log N(\fp)}{\log R} \right) \frac{2\log N(\fp)}{N(\fp) \log R} \\
+ O \left( \frac{1+ \log \log 3N(\id)}{\log R}\right).
\end{multline*}
By the Landau Prime Ideal Theorem (see \cite [Chapter XV \S 5]{La}) and partial summation,
\begin{equation*}
\sum_{\fp \nmid \id} \widehat{\phi} \left( \frac{2\log N(\fp)}{\log R} \right) \frac{2\log N(\fp)}{N(\fp) \log R}
\ = \  \frac{1}{2} \phi (0) + O \left(  \frac{1}{\log R}\right).
\end{equation*}
Assuming GRH for $L(s, \mathrm{sym}^2f)$, we have
\begin{equation*}
\sum_{\fp \nmid \id} \widehat{\phi} \left( \frac{2\log N(\fp)}{\log R} \right) \frac{2 \lambda_f(\fp^2) \log N(\fp)}{N(\fp) \log R}
\ \ll\ \frac{ \log \log (k^{2n}N(\id))}{\log R}.
\end{equation*}
The proposition follows from the above estimates.
\end{proof}

\section{Proof of the main theorem}
Let
\begin{equation*}
B_{2k}^{\star} (\phi) \ := \  \sum_{f \in H_{2k}^{\star} (\id)} D(f ; \phi).
\end{equation*}
By Proposition \ref{densityformula},
\begin{equation}
B_{2k}^{\star} (\phi)\ = \  |H_{2k}^{\star} (\id)| E(\phi) - P_{2k}^{\star} (\phi) + O \left(   |H_{2k}^{\star} (\id)|  \frac{\log \log (k^nN(\id))}{\log R}\right),
\end{equation}
where $E(\phi)= \widehat{\phi} (0) + \frac{1}{2} \phi (0)$ and
\begin{align*}
P_{2k}^{\star} (\phi) &\ = \  \sum_{\fp \nmid \id} \Delta_{2k, \id}^{\star} (\fp) \widehat{\phi} \left( \frac{\log N(\fp)}{\log R}\right) \frac{2 \log N(\fp)}{N(\fp)^{1/2} \log R} \\
  &\ = \  \sum_{\fp \nmid \id} (\Delta_{2k, \id}' (\fp) + \Delta_{2k, \id}^{\infty} (\fp) ) \widehat{\phi} \left( \frac{\log N(\fp)}{\log R}\right) \frac{2 \log N(\fp)}{N(\fp)^{1/2} \log R}.
\end{align*}
By Lemma \ref{Deltainfty} with $X=Y^{1/2} = (k^nN(\id))^{\delta}$ for some $\delta > 0$, we have
\begin{equation*}
\sum_{\fp \nmid \id} \Delta_{2k, \id}^{\infty} (\fp) \widehat{\phi} \left( \frac{\log N(\fp)}{\log R}\right) \frac{2 \log N(\fp)}{N(\fp)^{1/2} \log R} \ = \  o(k^nN(\id)).
\end{equation*}
By Proposition \ref{Petersson}, we have
\begin{align*}
M_{2k}^{\star} (\phi) \ := \  &\sum_{\fp \nmid \id} \Delta_{2k, \id}' (\fp) \widehat{\phi} \left( \frac{\log N(\fp)}{\log R}\right) \frac{2 \log N(\fp)}{N(\fp)^{1/2} \log R} \\
 \ = \ & \frac{2 \zeta_F(2) (-1)^{nk}}{(2\pi)^n} (2k-1)^n \sum_{\substack{\mathfrak{LM} = \id \\ N(\mathfrak{L}) \le X}} \mu (\mathfrak{L}) N(\mathfrak{M}) \sum_{ \substack{(\fm, \mathfrak{M})= (1) \\ \fm= (\nu), \  \nu \gg 0 \\ N(\fm) \le Y}} \frac{1}{N(\fm)} \sum_{\substack{\fp \nmid \id \\ \fp= (\mu), \ \mu \gg 0}} \sum_{\epsilon \in U} \sum_{c \in \mathfrak{M}^{*}/U} \\
& \frac{S(\nu^2, \mu \epsilon^2; c)}{|N(c)|} \prod_{i=1}^n J_{2k-1} \left(  \frac{4\pi \sqrt{(\nu^{(i)})^2 \mu^{(i)}} |\epsilon^{(i)}|}{|c^{(i)}|}\right) \widehat{\phi} \left( \frac{\log N(\fp)}{\log R}\right)  \frac{2 \log N(\fp)}{N(\fp)^{1/2} \log R}.
\end{align*}

For $\displaystyle x=  \frac{4\pi \sqrt{(\nu^{(i)})^2 \mu^{(i)}} |\epsilon^{(i)}|}{|c^{(i)}|}$ we choose $\eta =0$ if $|\epsilon^{(i)}| \le 1$ and $0 < \eta < 1$ if $|\epsilon^{(i)}| > 1$ in (\ref{JBesselEstimate}).
Thus
\begin{align} \label{Bessel}
\prod_{i=1}^{n} J_{2k-1} \left(  \frac{4\pi \sqrt{(\nu^{(i)})^2 \mu^{(i)}} |\epsilon^{(i)}|}{|c^{(i)}|} \right)
&\ \ll\  \left( \frac{(e\pi)^n \sqrt{N(\nu)^2 N(\mu)}}{k^n |N(c)|}\right)^{2k-1} |N(c)|^{\eta} \prod_{|\epsilon^{(i)}| > 1} |\epsilon^{(i)}|^{-\eta}  \nonumber \\
& \ \ll\   \frac{ \sqrt{N(\nu)^2 N(\mu)}}{k^n |N(c)|} |N(c)|^{\eta} \prod_{|\epsilon^{(i)}| > 1} |\epsilon^{(i)}|^{-\eta}
\end{align}
provided for $\displaystyle \frac{(e\pi)^n \sqrt{N(\nu)^2 N(\mu)}}{k^n |N(c)|} \le 1$.
Suppose $\widehat{\phi}$ has support in $(-u, u)$. So $N(\mu)= N(\fp) \le P=R^{u}$. Moreover $N(\nu)= N(\fm) \le Y$ and $\displaystyle |N(c)| \geq N(\mathfrak{M}) \geq \frac{N(\id)}{X}$, we have
$\displaystyle \frac{(e\pi)^n \sqrt{N(\nu)^2 N(\mu)}}{k^n |N(c)|} \le \frac{(e\pi)^n P^{1/2}XY}{k^nN(\id)}$. We choose $XY= (k^nN(\id))^{\delta}$ for some $\delta >0$. So the estimate is valid for
$\displaystyle u \le \frac{2(1-\delta) \log (k^nN(\id))}{\log (k^{2n}N(\id))}$.
By (\ref{WeilBound}) and (\ref{Bessel}), we have
\begin{align*}
M_{2k}^{\star} (\phi) \ \ll\ & (2k-1)^n \sum_{\substack{\mathfrak{LM}= \id \\ N(\mathfrak{L}) \le X}} N(\mathfrak{M}) \sum_{\substack{(\fm, \mathfrak{M}) =  1 \\ \fm= (\nu), \ \nu \gg 0 \\ N(\fm) \le Y}} \frac{1}{N(\fm)} \sum_{\substack{\fp \nmid \id \\ \fp= (\mu), \ \mu \gg 0}} \sum_{\epsilon \in U} \sum_{c \in \mathfrak{M}^{*}/U} \\
 & \frac{N((\fm^2, \fp, (c)))^{1/2} \tau((c)) (N(\fm)^2 N(\fp))^{1/2}}{k^n |N(c)|^{3/2-\eta}} \prod_{|\epsilon^{(i)}|>1} |\epsilon^{(i)}|^{-\eta} \frac{\log N(\fp)}{N(\fp)^{1/2} \log R} \\
 \ \ll\ & \frac{ N(\id)^{-1/2+ \eta + \varepsilon} XYP}{\log R},
\end{align*}
where for the last inequality we used (\ref{SumUnits}).

Hence $M_{2k}^{\star} (\phi)= o(k^n N(\id))$ for  $\displaystyle \frac{ N(\id)^{-1/2+ \eta + \varepsilon} XYP}{\log R} \le k^nN(\id)$.
By taking logarithms, we have
\begin{equation*}
u\ \le\ \left(\frac{3}{2} - (\delta + \eta + \varepsilon) \right) \frac{\log(k^nN(\id))}{\log (k^{2n}N(\id))} - \left( \frac{1}{2} -(\eta+ \varepsilon)\right) \frac{\log k^n}{\log (k^{2n}N(\id))}.
\end{equation*}




\begin{thebibliography}{99}
\bibitem[AL]{AL} A. O. L. Atkin and J. Lehner, \emph{Hecke operators on $\Gamma_0(m)$}, Math. Ann. 185 (1970), 134-160.

\bibitem[BFMT-B]{BFMT-B}
O. Barrett, F. W. K. Firk, S. J. Miller and Caroline Turnage-Butterbaugh, \emph{From Quantum Systems to $L$-Functions: Pair Correlation Statistics and Beyond}, to appear in Open Problems in Mathematics (editors John Nash  Jr. and Michael Th. Rassias), Springer-Verlag.

\bibitem [Bl]{Bl} D. Blasius, \emph{Hilbert modular forms and the Ramanujan conjecture}.
Noncommutative geometry and number theory, 35-56, Aspects Math., E37, Vieweg, Wiesbaden, 2006.

\bibitem[DM]{DM}
E. Due\~nez and S. J. Miller, \emph{The effect of
convolving families of $L$-functions on the underlying group
symmetries}, Proceedings of the London Mathematical Society, 2009; doi: 10.1112/plms/pdp018.


\bibitem [Ga]{Ga} P. B. Garrett, \emph{Holomorphic Hilbert Modular Forms}. Wadsworth Inc., 1990.


 \bibitem[GR]{GR} I.S. Gradshteyn and I.M. Ryzhik,  \emph{ Table of integrals, series, and products} (sixth edition), Academic Press, San Diego, CA, 2000.

\bibitem[Hej]{Hej}
\newblock D. Hejhal, \emph{On the triple correlation of zeros of the zeta function}, Internat. Math. Res. Notices 1994, no. 7, 294-302.

\bibitem[HM]{HM}
\newblock C. Hughes and S. J. Miller, \emph{Low-lying zeros of $L$-functions
with orthogonal symmtry}, Duke Math. J., \textbf{136} (2007), no. 1, 115--172.

%
%

\bibitem [IK]{IK} H. Iwaniec and E. Kowalski, \emph{Analytic number theory}, American  Mathematical Society Colloquium Publications, 53. American Mathematical Society, Providence, RI, 2004.

\bibitem[ILS]{ILS} H. Iwaniec, W. Luo, and P. Sarnak, \emph{Low lying zeros of families of $L$-functions}, Inst. Hautes \'Etudes Sci. Publ. Math. 91 (2000), 55-131.

%

\bibitem[KS1]{KS1}
N. Katz and P. Sarnak, \emph{Random Matrices, Frobenius Eigenvalues
and Monodromy}, AMS Colloquium Publications, Vol. 45, AMS,
Providence, RI, 1999.

\bibitem[KS2]{KS2}
N. Katz and P. Sarnak, \emph{Zeros of zeta functions and
symmetries}, Bull. AMS \textbf{36} (1999), 1--26.



\bibitem[La]{La} S. Lang, \emph{Algebraic number theory}. Second edition. Graduate Texts in Mathematics, 110. Springer-Verlag, New York, 1994. xiv+357 pp.

\bibitem [Lu]{Lu} W. Luo, \emph{Poincare series and Hilbert modular forms}. Ramanujan J. 7 (2003), 129--140.

%

\bibitem[MMRT-BW]{MMRT-BW} B. Mackall, S. J. Miller, C. Rapti, C. Turnage-Butterbaugh and K. Winsor, \emph{Some Results in the Theory of Low-lying Zeros} (with an appendix with Megumi Asada, Eva Fourakis, Kevin Yang), in Families of Automorphic Forms and the Trace Formula (Werner M\"uller, Sug Woo Shin and Nicolas Templier, editors), Simons Symposia series, Springer-Verlag, 2016.

\bibitem[Mil1]{Mil1}
S. J. Miller, \emph{$1$- and $2$-Level Densities for Families of Elliptic Curves: Evidence for the Underlying Group Symmetries}, Ph.D.
Thesis, Princeton University, 2002.

\bibitem[Mil2]{Mil2}
\newblock S. J. Miller, \emph{$1$- and $2$-level densities for families of elliptic curves: evidence for the underlying group symmetries}, Compositio Mathematica \textbf{140} (2004), 952--992.


\bibitem[Mi]{Mi} T. Miyake, \emph{On automorphic forms on $GL_2$ and Heceke operators}, Ann. of Math. (2), 94 (1971), 174-189.

\bibitem[Mo]{Mo}
H. L. Montgomery, \emph{The pair correlation of zeros of the zeta function.} Analytic Number Theory, \textbf{24} (1973), 181-193.

\bibitem[Od1]{Od1}
\newblock A. Odlyzko, \emph{On the distribution of spacings between zeros of the zeta function}, Math. Comp. \textbf{48} (1987),
no. 177, 273--308.

\bibitem[Od2]{Od2}
\newblock A. Odlyzko, \emph{The $10^{22}$-nd zero of the Riemann zeta function}, Proc.
Conference on Dynamical, Spectral and Arithmetic Zeta-Functions, M. van Frankenhuysen and M. L. Lapidus, eds., Amer. Math. Soc.,
Contemporary Math. series, 2001.

\bibitem[RS]{RS}
Z. Rudnick and P. Sarnak, \emph{Zeros of principal $L$-functions and random matrix theory}, Duke Math. J. \textbf{81},
1996, 269--322.


\bibitem [Shi]{Sh} H. Shimizu, \emph{On discontinuous groups acting on a product of upper half planes}. Ann. of Math. 77 (1963), 33--71.

\bibitem[SaShTe]{SaShTe}
P. Sarnak, S. W. Shin and N. Templier, \emph{Families of $L$-functions and their symmetry}, in Families of Automorphic Forms and the Trace Formula (Werner M\"uller, Sug Woo Shin and Nicolas Templier, editors), Simons Symposia series, Springer-Verlag, 2016.

\bibitem[Ve]{Ve} A. Venkatesh, \emph{Beyond endoscopy and special forms on GL(2)}. J. Reine Angew. Math. 577 (2004), 23-80.

\bibitem[vG]{vG}  G. van der Geer, {\em Hilbert modular surfaces.\/} Ergebnisse der Mathematik und ihrer Grenzgebiete (3)
[Results in Mathematics and Related Areas (3)], 16. Springer-Verlag, Berlin, 1988. x+291 pp.
\end{thebibliography}
\end{document}